\documentclass[11pt]{amsart}

\usepackage[T1]{fontenc}
\usepackage[utf8]{inputenc}
\usepackage{lmodern}
\usepackage{amsmath,amssymb,amsthm,mathtools}
\usepackage{enumitem}
\usepackage{microtype}
\usepackage[colorlinks=true,linkcolor=blue,citecolor=blue,urlcolor=blue]{hyperref}
\usepackage{geometry}
\numberwithin{equation}{section}

\newtheorem{theorem}{Theorem}[section]
\newtheorem{proposition}[theorem]{Proposition}
\newtheorem{lemma}[theorem]{Lemma}
\newtheorem{corollary}[theorem]{Corollary}

\theoremstyle{definition}
\newtheorem{definition}[theorem]{Definition}
\theoremstyle{remark}
\newtheorem{remark}[theorem]{Remark}

\newcommand{\fPD}{\operatorname{fPD}}
\newcommand{\pd}{\operatorname{pd}}
\newcommand{\Ext}{\operatorname{Ext}}
\newcommand{\Hom}{\operatorname{Hom}}
\newcommand{\Tor}{\operatorname{Tor}}
\newcommand{\Spec}{\operatorname{Spec}}
\newcommand{\Max}{\operatorname{Max}}
\newcommand{\GSpec}{\operatorname{GSpec}}
\newcommand{\Ann}{\operatorname{Ann}}
\newcommand{\depth}{\operatorname{depth}}
\newcommand{\htp}{\operatorname{ht}}
\newcommand{\Kgr}{\operatorname{K.grade}}
\newcommand{\Egr}{\operatorname{E.grade}}
\newcommand{\FPR}{\mathcal{FPR}}
\newcommand{\SpecOne}{\operatorname{X}^{1}}

\title[Goldman primes and cyclic presentations]
{Goldman primes and higher cyclic presentations for the small finitistic dimension}

\author{Xiaolei Zhang}
\address{School of Mathematics and Statistics, Tianshui Normal University, Tianshui 741001, China}
\email{zxlrghj@163.com}

\author{Hwankoo Kim}
\address{Division of Computer Engineering, Hoseo University, Asan 31499, Republic of Korea}
\email{hkkim@hoseo.edu}

\subjclass[2020]{13D05, 13E15, 13C10, 13B25}
\keywords{small finitistic dimension, polynomial ring, Koszul grade, Goldman prime, Jacobson ring, cyclic weak $(n,d)$-ring, $n$-presented module, idealization}
\hypersetup{%
  pdftitle={Goldman primes and higher cyclic presentations for the small finitistic dimension},
  pdfauthor={Xiaolei Zhang and Hwankoo Kim},
  pdfsubject={Small finitistic dimension, polynomial extensions, and cyclic weak (n,d)-rings},
  pdfkeywords={small finitistic dimension, polynomial ring, Koszul grade, Goldman prime, Jacobson ring, cyclic weak (n,d)-ring, n-presented module, idealization}
}

\begin{document}

\begin{abstract}
Let $R$ be a commutative ring and let $\fPD(R)$ be its small finitistic dimension in the sense of Glaz.  We study two ways in which Koszul grade can escape a restricted homological test.  For a polynomial extension, we prove that if $R$ is Noetherian, the maximal ideals of $R[X]$ detect exactly the Goldman primes of $R$, and this gives the formula
\[
 \fPD(R[X])
 =1+\sup\{\depth R_{\mathfrak p}\mid \mathfrak p\in\GSpec(R)\}.
\]
We construct Noetherian local rings $R_m$ with $\fPD(R_m)=0$ and $\fPD(R_m[X])=m+1$, and we show that Koszul grade can increase by two along an adjacent pair of primes.  We then consider cyclic weak $(n,d)$-conditions.  For every $n\geq2$ and $h\geq1$, we construct a local idealization $T$ such that every $n$-presented cyclic $T$-module is projective, while $\fPD(T)=h$.  The ring $T$ has no nonzero proper finitely presented ideals, but it has a finitely generated ideal $I$ with $\Ext_T^i(T/I,T)=0$ for $i<h$ and $\Ext_T^h(T/I,T)\neq0$.  Thus the cyclic condition controls $\fPD$ at presentation level one, but at no higher level.
\end{abstract}

\maketitle

\section{Introduction}\label{sec:introduction}

The small finitistic dimension records the finite projective dimensions that can be realized by modules having finite resolutions by finitely generated projective modules.  This finiteness condition is essential outside the Noetherian setting: a finitely generated module of finite projective dimension need not have finitely generated syzygies.  Following Glaz \cite{Glaz}, let $\FPR(R)$ denote the class of $R$-modules admitting a finite projective resolution by finitely generated projective modules, and set
\[
 \fPD(R)=\sup\{\pd_R M\mid M\in\FPR(R)\}.
\]
This invariant belongs to the family of finitistic dimensions initiated by Bass \cite{Bass}, but its finite-resolution definition makes it particularly well adapted to commutative non-Noetherian homological algebra.

A decisive feature of $\fPD$ is that it can be read from Koszul grade.  More precisely, $\fPD(R)$ is the supremum of the Koszul grades of the maximal ideals of $R$ \cite{ZhangWang,ZhangII}.  The present paper asks how much of that grade remains visible after one changes the ring or restricts the class of cyclic modules used as tests.  Two natural problems lead to sharply different kinds of hidden grade.

The first problem is the behavior of $\fPD$ under adjoining an indeterminate.  The regular element $X$ forces $\fPD(R[X])\geq\fPD(R)+1$, but an upper bound cannot be obtained merely by looking at maximal ideals of $R$: a maximal ideal of $R[X]$ may contract to a nonmaximal prime.  Kaplansky's characterization of G-domains shows that the possible contractions are exactly the Goldman primes.  Consequently, the appropriate geometric detector is not $\Max(R)$ but the Goldman locus $\GSpec(R)$.

The second problem comes from the hierarchy of $n$-presented modules introduced in the study of $(n,d)$-rings.  Costa's $(n,d)$-rings test all $n$-presented modules, Zhou's weak version uses flat dimension, and Mahdou's cyclic version tests only $n$-presented cyclic modules \cite{Costa, Mahdou, Zhou}.  At level $n=1$, every quotient $R/I$ with $I$ finitely generated is visible.  At level $n=2$, the defining ideal must itself be finitely presented.  This additional requirement permits finitely generated ideals of large Koszul grade to disappear completely from the cyclic test class.

The first main theorem determines the polynomial behavior and exhibits the size of the possible jump.

\begin{theorem}\label{thm:intro-polynomial}
Let $R$ be a commutative ring and let $X$ be an indeterminate.
\begin{enumerate}[label=\textup{(\roman*)}]
\item \cite[Proposition 4.1]{ZhangII} One has $\fPD(R[X])\geq\fPD(R)+1$.
\item \cite[Theorem 4.2]{ZhangII} If $R$ is Jacobson, then $\fPD(R[X])=\fPD(R)+1$.
\item If $R$ is Noetherian, then
\[
 \fPD(R[X])
 =1+\sup\{\depth R_{\mathfrak p}\mid \mathfrak p\in\GSpec(R)\}.
\]
\item For every integer $N\geq1$, there is a Noetherian local ring $R$ such that $\fPD(R)=0$ and $\fPD(R[X])=N$.
\end{enumerate}
\end{theorem}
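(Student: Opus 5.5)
Parts (i) and (ii) are quoted from \cite{ZhangII}, so the work lies in (iii) and (iv). The basic tool is the formula $\fPD(S)=\sup\{\Kgr(\mathfrak m)\mid \mathfrak m\in\Max(S)\}$ from \cite{ZhangWang,ZhangII}. For a Noetherian ring, Koszul grade equals ordinary grade, and $\fPD(S)=\sup\{\depth S_{\mathfrak m}\mid \mathfrak m\in\Max(S)\}$. We apply this with $S=R[X]$, which is Noetherian by Hilbert's basis theorem.

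\textbf{Proof of (iii).} Let $\mathfrak M$ be a maximal ideal of $R[X]$ and set $\mathfrak p=\mathfrak M\cap R$.
\begin{enumerate}[label=(\alph*)]
\item \emph{Contractions are Goldman.} $R[X]/\mathfrak M$ is a field that is a simple extension of $(R/\mathfrak p)$. Kaplansky's theorem then forces $R/\mathfrak p$ to be a G-domain, so $\mathfrak p\in\GSpec(R)$.
\item \emph{Every Goldman prime is hit.} Let $\mathfrak p$ be Goldman, and choose $u\neq0$ in $R/\mathfrak p$ with $(R/\mathfrak p)[u^{-1}]=\kappa(\mathfrak p)$. Lift $u$ to $a\in R$. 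The kernel of $R[X]\to\kappa(\mathfrak p)$, $X\mapsto a^{-1}$, is a maximal ideal $\mathfrak M$ with $\mathfrak M\cap R=\mathfrak p$; explicitly $\mathfrak M=(\mathfrak p,aX-1)$.
\item \emph{Depth of a maximal ideal over $\mathfrak p$.} Localize: $R[X]_{\mathfrak M}$ is a localization of $R_{\mathfrak p}[X]$ at a maximal ideal over $\mathfrak pR_{\mathfrak p}$. This ring is flat local over $R_{\mathfrak p}$ with closed fibre $\kappa(\mathfrak p)[X]_{(f)}$, a DVR. The depth formula for flat local homomorphisms gives
\[
\depth R[X]_{\mathfrak M}=\depth R_{\mathfrak p}+1.
\]
\end{enumerate}
Taking the supremum over $\Max(R[X])$, and using that (a) and (b) identify the set of contractions with $\GSpec(R)$, yields (iii).

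\textbf{Proof of (iv).} By (iii) we need a Noetherian local ring $(R,\mathfrak m)$ with $\depth R=0$ and a Goldman prime $\mathfrak p$ with $\depth R_{\mathfrak p}=N-1$.

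In a Noetherian ring the Goldman primes are exactly the primes $\mathfrak p$ with $\dim R/\mathfrak p\le1$ and $V(\mathfrak p)$ finite. So in a local ring they are $\mathfrak m$ and the primes of coheight one. We therefore want a prime $\mathfrak p$ of coheight one with $\depth R_{\mathfrak p}=N-1$ while $\depth R=0$.

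Try idealization or a fibre-product construction. Take $A=k[[x_1,\dots,x_N]]$ and $\mathfrak p_0=(x_1,\dots,x_{N-1})$, so $A_{\mathfrak p_0}$ is regular of dimension $N-1$ and $\dim A/\mathfrak p_0=1$. Set $R=A\ltimes k$, where $k=A/\mathfrak m_A$ is the residue field.
\begin{itemize}
\item $\depth R=0$, because $(0,1)$ is killed by the maximal ideal.
\item The primes of $R$ are $P\ltimes k$, so $\mathfrak p=\mathfrak p_0\ltimes k$ is a coheight-one prime.
\item At $\mathfrak p$ the module $k$ localizes to zero, so $R_{\mathfrak p}\cong A_{\mathfrak p_0}$, which has depth $N-1$.
\end{itemize}
Hence $\fPD(R)=\depth R=0$ and $\fPD(R[X])=1+\max(0,N-1)=N$.

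The main obstacle is step (c), the depth computation for maximal ideals of $R[X]$ lying over nonmaximal $\mathfrak p$. Two points need checking there:
\begin{itemize}
\item the fibre is regular of dimension one;
\item the depth additivity under flat local maps is applied correctly.
\end{itemize}
Also needed is the Noetherian characterization of Goldman primes used in (iv). If that characterization is inconvenient, we can instead verify directly that $\mathfrak p_0\ltimes k$ is Goldman: $R/\mathfrak p\cong k[[x_N]]$ is a G-domain, since inverting $x_N$ gives its fraction field.
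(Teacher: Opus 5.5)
\textbf{Parts (i)--(iii).} Your proof of (iii) follows the paper. Steps (a)--(b) are Proposition~\ref{prop:Goldman-contraction}, i.e.\ Kaplansky's criterion; your explicit ideal $\mathfrak pR[X]+(aX-1)R[X]$ is a valid concrete witness. Step (c) is precisely the paper's alternative proof of Lemma~\ref{lem:depth-poly-local} via the flat local depth formula. The two checks you flag there both go through: the closed fibre is a DVR, and the map is flat and local.

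\textbf{Part (iv): the example.} Your ring is the paper's construction from Theorem~\ref{thm:Noetherian-jumps}: a regular local ring idealized by its residue field, with power series in place of a localized polynomial ring. It also covers $N=1$, so you do not need the paper's separate field case.

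\textbf{Part (iv): the gap.} Your reduction asks only for $\depth R=0$ and \emph{one} Goldman prime $\mathfrak p$ with $\depth R_{\mathfrak p}=N-1$. Your final computation $1+\max(0,N-1)$ then takes the supremum over $\mathfrak m$ and $\mathfrak p$ alone. But the formula in (iii) is a supremum over all of $\GSpec(R)$, which by your own description contains every prime of coheight one. As written, you have shown only $\fPD(R[X])\geq N$. The reduction must also require $\depth R_{\mathfrak q}\leq N-1$ for every $\mathfrak q\in\GSpec(R)$, and this has to be checked for your ring.

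The check is easy:
\begin{enumerate}
\item Every nonmaximal prime of $R=A\ltimes k$ has the form $Q\ltimes k$ with $Q\neq\mathfrak m_A$.
\item Any element of $\mathfrak m_A\setminus Q$ becomes a unit in $R_{Q\ltimes k}$ and kills $k$, so $R_{Q\ltimes k}\cong A_Q$.
\item $A_Q$ is regular local of depth $\htp Q\leq N-1$.
\end{enumerate}
Alternatively, for every coheight-one $\mathfrak q$ one has $\depth R_{\mathfrak q}\leq\htp\mathfrak q\leq\dim R-\dim R/\mathfrak q=N-1$. This bound over all other primes is exactly the step the paper carries out in its proof.

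You should also note the routine fact that $A\ltimes k$ is Noetherian local, since $k$ is a finitely generated module over the Noetherian local ring $A$. With these two additions your proof is complete.
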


The local examples in part (iv) show that Noetherianity and finiteness of $\fPD(R)$ do not by themselves force a one-step increase.  They also produce an adjacent pair of primes $\mathfrak P\subsetneq\mathfrak Q$ for which $\Kgr(\mathfrak P)=0$ and $\Kgr(\mathfrak Q)=2$.  Thus the height-one relation between two primes does not impose a corresponding one-step bound on Koszul grade.

The second main theorem gives the exact contrast between the first and higher presentation levels.

\begin{theorem}\label{thm:intro-cyclic}
Let $n\geq2$ and $h\geq1$.  There exists a commutative local ring $T$ such that every $n$-presented cyclic $T$-module is projective and $\fPD(T)=h$.  Moreover, $T$ has a finitely generated proper ideal $I$ satisfying
\[
 \Ext_T^i(T/I,T)=0\quad(0\leq i<h),
 \qquad
 \Ext_T^h(T/I,T)\neq0.
\]
By contrast, every cyclic weak $(1,d)$-ring has small finitistic dimension at most $d$.
\end{theorem}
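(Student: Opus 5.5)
The plan is to build $T$ as an idealization $T=A\ltimes E$. Here $A$ is a local ring carrying a finitely generated ideal $J$ of Koszul grade $h$ on $A$, and $E$ is a module chosen to destroy every nonzero proper finitely presented ideal of $T$. Concretely:
\begin{itemize}
\item Take $A=k[[x_1,\dots,x_h]]$, or its localization $k[x_1,\dots,x_h]_{(x)}$, with maximal ideal $\mathfrak m=(x_1,\dots,x_h)$.
\item Take $E$ to be a large $\mathfrak m$-divisible-type module, for instance $E=\bigoplus$ of many copies of the fraction field $K=\mathrm{Frac}(A)$, or $E=K^{(\mathbb N)}$.
\item Set $I=\mathfrak m\ltimes E$ or $I=\mathfrak m T$.
\end{itemize}
The key tool is the standard description of finitely presented ideals and of $n$-presented cyclic modules over idealizations $A\ltimes E$ with $E$ divisible and faithful-like. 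The first step is to show that any finitely generated ideal $L=(a_1,e_1),\dots$ of $T$ with some $a_i\neq0$ has a relation module containing $0\ltimes E'$ for a non-finitely-generated submodule $E'$. This holds because $\operatorname{Ann}_T(a,e)\supseteq 0\ltimes\operatorname{Ann}_E(a)$. Dually, an ideal contained in $0\ltimes E$ is a $k$-space-like module killed by $0\ltimes E$ and is never finitely presented. Hence the only finitely presented ideals are $0$ and $T$, and the $n$-presented cyclic modules for $n\ge 2$ are exactly $0$ and $T$. Every such module is therefore projective, and $T$ is a cyclic weak $(n,0)$-ring for all $n\ge2$.

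For $\fPD(T)=h$ I would use the fact recalled in the introduction that $\fPD$ is the supremum of the Koszul grades of the maximal ideals. Here the only maximal ideal is $\mathfrak M=\mathfrak m\ltimes E$, whose Koszul grade is computed on finitely generated subideals. I would compute $\Kgr_T$ via the flat-type change of rings $A\to T$: the Koszul complex of $(x_1,0),\dots,(x_h,0)$ on $T$ is $K(x;A)\otimes_A T=K(x;A)\oplus K(x;E)$. For $E=K^{(\mathbb N)}$, the complex $K(x;E)$ is exact because the $x_i$ act invertibly. Hence the Koszul grade is that of $A$, namely $h$.

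Elements involving $E$ lie in a nilpotent ideal, so adjoining them cannot raise the grade beyond $h$. The reason is that the Koszul grade of $\mathfrak M$ equals the grade of $\sqrt{\,}$-equivalent ideals, and $\mathfrak M=\sqrt{\mathfrak mT}$. The upper bound uses $H^h(x;A)=A/\mathfrak m\neq0$, which survives in $T$. Then $\Ext_T^i(T/I,T)$ for $I=\mathfrak mT$ follows from Koszul self-duality, since $(x_i,0)$ form a $T$-regular sequence: $T$ is $A$-flat as $A\oplus E$ with $E$ flat. This gives the vanishing for $i<h$ and $\Ext^h_T(T/I,T)\cong T/I\neq0$.

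The final claim is separate. If every $1$-presented cyclic module has flat dimension at most $d$, take a maximal ideal $\mathfrak M$ and any finitely generated $L\subseteq\mathfrak M$. Then $R/L$ is finitely presented, so $\operatorname{fd}R/L\le d$. If $\Kgr(L)>d$, the Koszul cohomology gives a nonzero $\Ext^{g}(R/L,R)$, more precisely a nonvanishing $\Tor$ against the Koszul complex, in degree $g>d$. I would make this precise through the characterization $\Kgr(L)=\inf\{i:H^i(K(L;R))\neq0\}$ together with the $\Tor$-rigidity of finite free complexes. This yields $\Kgr(\mathfrak M)\le d$, hence $\fPD(R)\le d$.

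The main obstacle is that last step: relating flat dimension of $R/L$ to Koszul grade in a non-Noetherian setting. I would first try to reduce to $\FPR$ modules directly. Any $M\in\FPR(R)$ with $\pd M=p$ gives, via a Koszul-type argument, a finitely generated ideal whose quotient has $\Tor_p\neq0$ against a suitable cyclic module.
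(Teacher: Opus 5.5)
There is a genuine gap: your ring $T=A\ltimes K^{(\mathbb N)}$ is not a cyclic weak $(n,0)$-ring for any $n$.

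Because $E=K^{(\mathbb N)}$ is torsion-free, every $0\neq a\in A$ acts injectively on $T=A\oplus E$. So $(a,0)$ is a nonzerodivisor of $T$, and your annihilator argument is vacuous, since $\Ann_E(a)=0$. Consequently $(x_1,0)T\cong T$ is a nonzero proper free ideal. The quotient $T/(x_1,0)T$ has the resolution $0\to T\to T\to T/(x_1,0)T\to0$, so it is $n$-presented for every $n$ and has projective dimension $1$.

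Your own grade computation shows the same thing more sharply. You prove that $X_1,\dots,X_h$ is $T$-regular. Since $E$ is divisible, $(X_1,\dots,X_h)T=\mathfrak m\ltimes E$ is the maximal ideal. Hence the Koszul complex is a finite free resolution of $T/I=k$ and $\pd_T k=h$, so the two halves of your proposal contradict each other.

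More generally, in any local ring where every $2$-presented cyclic module is projective, every element of the maximal ideal must be a zerodivisor. The grade $h\geq1$ must therefore be carried by a finitely generated ideal of positive Koszul grade consisting entirely of zerodivisors. A divisible or torsion-free $E$ can never produce this.

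The missing idea is a square-zero module that is torsion yet has large depth. The paper takes $A=k[x_1,\dots,x_{h+1}]_{(x_1,\dots,x_{h+1})}$, $M=\bigoplus_{\mathfrak p\in\SpecOne(A)}(A/\mathfrak p)^{(\mathbb N)}$ and $T=A\ltimes M$.
\begin{itemize}
\item Every nonzero nonunit $a$ of the UFD $A$ kills the summands $A/(\pi)$ for an irreducible factor $\pi$ of $a$, so every element of the maximal ideal is a zerodivisor.
\item For an ideal whose generators have first coordinates generating $0\neq J\subsetneq A$, choose a height-one $\mathfrak p$ with $\Tor_1^A(A/J,A/\mathfrak p)\neq0$. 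Then choose a copy of $A/\mathfrak p$ outside the supports of any finitely many proposed relations. A relation supported on that fresh copy cannot be generated by them, so the ideal is not finitely presented. A Nakayama argument handles $J=0$.
\item On the grade side, the Koszul complex of $\mathbf x$ on $T$ splits as $K(\mathbf x;A)\oplus K(\mathbf x;M)$. The first summand has no cohomology below degree $h+1$. Each $A/\mathfrak p$ is a Cohen--Macaulay hypersurface of depth $h$. Hence $\Kgr_T(I,T)=h$ for $I=(\mathbf x)T$, which is why the base must have dimension $h+1$ rather than $h$.
\end{itemize}

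On the last assertion: in the paper a cyclic weak $(n,d)$-ring is defined by $\pd\leq d$, not by flat dimension, and then no Tor-rigidity is needed. For a finitely generated $L\subseteq\mathfrak M$, $\Kgr_R(L,R)$ is finite and equals the Ext grade. Since $\pd_R(R/L)\leq d$, we have $\Ext^i_R(R/L,R)=0$ for $i>d$. So $\Kgr_R(\mathfrak M,R)\leq d$ and hence $\fPD(R)\leq d$. Your flat-dimension variant leaves this step open, as you acknowledge.
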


The construction behind Theorem~\ref{thm:intro-cyclic} is a large square-zero idealization.  Countably many copies of every height-one quotient provide fresh coordinates that prevent nonzero proper ideals from being finitely presented.  At the same time, those height-one quotients retain a prescribed Koszul grade.  This separates presentation-theoretic visibility from homological depth and answers the question in \cite[Open Question~3.8]{ZhangIII} negatively for every $n\geq2$.

The two parts of the paper are unified by a visibility principle.  In the polynomial problem, maximal ideals upstairs reveal nonmaximal Goldman primes downstairs.  In the cyclic problem, the $n$-presentation requirement hides finitely generated ideals whose Ext grade controls $\fPD$.  In both cases, an apparently natural restricted test misses precisely the objects carrying the larger grade.

Section~\ref{sec:preliminaries} develops the required grade formalism and recalls $n$-presented modules, idealizations, and Goldman primes.  Section~\ref{sec:polynomial} proves the polynomial formulas and constructs the Noetherian counterexamples.  Section~\ref{sec:cyclic} develops the large idealizations and establishes the sharp threshold between $n=1$ and $n\geq2$.  All rings are commutative with identity, all modules are unital, and $\infty+1=\infty$.

\section{Grade detection and auxiliary constructions}\label{sec:preliminaries}

This section places the two problems in a common framework.  We first recall the grade description of the small finitistic dimension, then isolate the finite-presentation fact needed for cyclic quotients, and finally identify the primes that arise as contractions of maximal ideals in a polynomial ring.

\subsection{Koszul grade and the small finitistic dimension}

Let $A$ be a ring, let $N$ be an $A$-module, and let $\mathbf a=a_1,\ldots,a_s$ be a finite sequence.  Write $K_\bullet(\mathbf a;A)$ for the Koszul complex and put $K^\bullet(\mathbf a;N)=\Hom_A(K_\bullet(\mathbf a;A),N)$.  If $I=(\mathbf a)$, the Koszul grade of $I$ on $N$ is
\[
 \Kgr_A(I,N)
 =\inf\{i\geq0\mid H^i(K^\bullet(\mathbf a;N))\neq0\}.
\]
The value is independent of the chosen finite generating sequence.  For an arbitrary ideal $J$, one defines
\[
 \Kgr_A(J,N)
 =\sup\{\Kgr_A(I,N)\mid I\subseteq J\text{ is finitely generated}\}.
\]
We use $\inf\varnothing=\infty$.  If $I$ is finitely generated, its Ext grade on $N$ is $\Egr_A(I,N)=\inf\{i\geq0\mid\Ext_A^i(A/I,N)\neq0\}$.  These notions and their non-Noetherian properties are developed in \cite{AsgharzadehTousi}; see also \cite[Sections~1.6 and~9.1]{BrunsHerzog}.

The following standard properties will be used throughout.

\begin{proposition}\label{prop:grade-properties}
Let $A$ be a ring, let $N$ be an $A$-module, and let $I\subseteq J$ be ideals.
\begin{enumerate}[label=\textup{(\roman*)}]
\item If $x\in J$ is $N$-regular, then
\[
 \Kgr_A(J,N)
 =1+\Kgr_{A/xA}\bigl(J(A/xA),N/xN\bigr).
\]
\item If $A\to B$ is a ring homomorphism and $N$ is a $B$-module, then $\Kgr_A(J,N)=\Kgr_B(JB,N)$.
\item If $A\to B$ is faithfully flat, then $\Kgr_A(J,N)=\Kgr_B(JB,N\otimes_A B)$.
\item One has $\Kgr_A(I,N)\leq\Kgr_A(J,N)$.
\item If $I$ is finitely generated, then $\Kgr_A(I,N)=\Egr_A(I,N)$.
\item If $J$ is proper, then $\Kgr_A(J,N)=\Kgr_A(\mathfrak p,N)$ for some prime ideal $\mathfrak p\supseteq J$.
\end{enumerate}
\end{proposition}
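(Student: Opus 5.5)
These six properties are standard, so I would reduce each to the definition via Koszul cohomology, using two tools: the long exact sequences of Koszul cohomology, and the fact that $H^\bullet(K^\bullet(\mathbf a;N))$ is annihilated by $(\mathbf a)$. The order would be (ii), (iv), (iii), (v), (i), (vi).

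\emph{Base change (ii).} If $N$ is a $B$-module and $\mathbf a\subseteq A$, then $K^\bullet(\mathbf a;N)$ computed over $A$ coincides with $K^\bullet(\varphi(\mathbf a);N)$ computed over $B$, because $\Hom_A(A^{r},N)=N^{r}=\Hom_B(B^{r},N)$ with the same differentials. So the finitely generated case is immediate. For arbitrary $J$, note that every finitely generated $I'\subseteq JB$ lies in $I B$ for some finitely generated $I\subseteq J$. Combined with monotonicity, the two suprema then agree.

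\emph{Monotonicity (iv) and faithful flatness (iii).} For finitely generated $I\subseteq J$ I would use the standard fact that adjoining an element $b$ to a sequence $\mathbf a$ does not lower the grade. This follows from the long exact sequence
\[
\cdots\to H^{i-1}(\mathbf a;N)\to H^i(\mathbf a,b;N)\to H^i(\mathbf a;N)\to\cdots.
\]
It also follows that the grade does not depend on the chosen generators. For arbitrary ideals, (iv) is then immediate from the definition as a supremum. For (iii), flatness gives $H^i(K^\bullet(\mathbf a;N))\otimes_A B\cong H^i(K^\bullet(\mathbf a;N\otimes_A B))$, since $K_\bullet$ consists of finite free modules. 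Faithful flatness then preserves vanishing in both directions, and the passage to the supremum is as in (ii).

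\emph{Ext versus Koszul (v) and the regular-element formula (i).} Part (v) is the step I expect to be the main obstacle, since outside the Noetherian setting $\Ext$ grade and Koszul grade differ in general. I would first check carefully whether (v) holds for arbitrary finitely generated $I$ and arbitrary $N$. In the non-Noetherian literature (Asgharzadeh--Tousi) the comparison is in general only an inequality, so the honest plan is to cite \cite{AsgharzadehTousi}, and if needed add whatever hypothesis that reference requires.

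The route I would try is induction on $g=\Egr_A(I,N)$, comparing the first nonvanishing index. The key fact is that both $\Ext_A^i(A/I,N)$ and $H^i(K^\bullet(\mathbf a;N))$ are annihilated by $I$. For $g=0$: $\Hom(A/I,N)=0$ if and only if $(0:_N I)=0$, and $(0:_N I)$ is exactly $H^0(K^\bullet(\mathbf a;N))$. For $g\geq1$ one wants an $N$-regular element in $I$, which may fail to exist. I would pass to $A[t]$ (faithfully flat), where the element $\sum a_it^{i}$ is regular, and use (iii) to move between the two rings.

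Part (i) then follows from the short exact sequence $0\to N\xrightarrow{x}N\to N/xN\to0$. Its long exact sequence in Koszul cohomology for $(\mathbf a,x)$, where multiplication by $x$ is zero on cohomology, gives $H^{i}(N/xN)\cong H^{i+1}(N)$ in the relevant range. Applying (ii) to $A\to A/xA$ then gives the shift by one.

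\emph{Prime realization (vi).} Since the grade is a supremum of integer-or-$\infty$ values that is monotone in the ideal, I would use Zorn's lemma on the proper ideals $J'\supseteq J$ with $\Kgr(J',N)=\Kgr(J,N)$, ordered by inclusion. This family is closed under chains because every finitely generated subideal of a union lies in one member. A maximal element $\mathfrak p$ is then shown to be prime by the standard argument: if $ab\in\mathfrak p$ with $a,b\notin\mathfrak p$, then both $\mathfrak p+(a)$ and $\mathfrak p+(b)$ have strictly larger grade. For finitely generated pieces, the inequality $\Kgr(I+(ab))\geq\min\{\Kgr(I+(a)),\Kgr(I+(b))\}$, which follows from Koszul long exact sequences, then yields a contradiction. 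This last inequality is the second delicate point, and I would again defer to \cite[Section~9.1]{BrunsHerzog} and \cite{AsgharzadehTousi} for it.
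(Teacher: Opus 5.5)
Your arguments for (i)--(iv) are sound. Your Zorn's-lemma route to (vi) is a reasonable self-contained replacement for the paper's proof, which simply cites Asgharzadeh--Tousi for all six parts.

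The genuine gap is in (v), and it has two parts.

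\emph{Your doubt is misplaced.} For every finitely generated ideal $I$ and every module $N$ one has $\Kgr_A(I,N)=\Egr_A(I,N)$. The strict inequality you have in mind occurs only for non-finitely generated ideals. For instance, the maximal ideal of a non-discrete rank-one valuation domain has Ext grade at least $2$ but Koszul grade $1$. Adding a hypothesis would damage the paper. Part (v) is used in Proposition~\ref{prop:grade-Ir} for $I_r\subseteq T_r$, which is finitely generated but, by Theorem~\ref{thm:no-finitely-presented-ideals}, not finitely presented.

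\emph{Your sketched induction does not close.} Part (iii) transports Koszul grade along $A\to A[t]$, but Ext grade moves only one way. The module $\Ext^i_{A[t]}(A[t]/IA[t],N[t])\cong\Ext^i_A(A/I,N^{(\mathbb N)})$ contains $\Ext^i_A(A/I,N)$ as a direct summand. However, for $i\geq2$ the functor $\Ext^i_A(A/I,-)$ need not commute with direct sums when $I$ is not finitely presented. So vanishing over $A$ does not formally give vanishing over $A[t]$. Your route runs as follows: Koszul vanishing over $A$, hence over a polynomial ring; there McCoy's theorem produces a regular sequence inside the extended ideal, giving Ext vanishing over the polynomial ring; then the direct-summand remark brings this back to $A$. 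Organized correctly, this proves only $\Kgr_A(I,N)\leq\Egr_A(I,N)$.

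The missing idea for $\Egr_A(I,N)\leq\Kgr_A(I,N)$ is to work in the first variable.
\begin{enumerate}[label=\textup{(\arabic*)}]
\item If $\Ext^i_A(A/I,N)=0$ for $i<n$, then $\Ext^i_A(L,N)=0$ for $i<n$ and every module $L$ with $IL=0$. To see this, dimension-shift along a resolution of $L$ by free $A/I$-modules, using $\Ext^i_A((A/I)^{(X)},N)\cong\prod_X\Ext^i_A(A/I,N)$.
\item The Koszul homology modules $H_q(\mathbf a;A)$ are annihilated by $I$. Use the spectral sequence $E_2^{p,q}=\Ext^p_A(H_q(\mathbf a;A),N)\Rightarrow H^{p+q}(K^\bullet(\mathbf a;N))$, which exists because $K_\bullet(\mathbf a;A)$ is a bounded complex of finite free modules.
\item By step (1), every $E_2$ term with $p+q<n$ vanishes, so $H^m(K^\bullet(\mathbf a;N))=0$ for $m<n$.
\end{enumerate}

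A smaller point concerns (vi). The inequality $\Kgr(I+(ab),N)\geq\min\{\Kgr(I+(a),N),\Kgr(I+(b),N)\}$ does not follow from long exact sequences alone beyond degree $0$. You can prove it by induction on the minimum:
\begin{enumerate}[label=\textup{(\arabic*)}]
\item Check $(0:_N(I+(ab)))=0$ directly. If $m$ is killed by $I+(ab)$, then $bm$ is killed by $I+(a)$, so $bm=0$, and then $m$ is killed by $I+(b)$.
\item Pass to $A[t]$. Let $c_1,\ldots,c_s$ generate $I$. By McCoy, the element $\sum_{i=1}^s c_it^{i-1}+ab\,t^s$ is $N[t]$-regular, and it lies in all three extended ideals.
\item Apply (i) and (iii) to this element to reduce the minimum by one and invoke the induction hypothesis.
\end{enumerate}
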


\begin{proof}
Parts (i)--(iv) are the regular-sequence, change-of-rings, faithfully flat base-change, and monotonicity properties of Koszul grade.  Prime attainment in (vi) follows from the corresponding result for polynomial grade, which agrees with Koszul grade.  Part (v) is the equality of Ext and Koszul grade for finitely generated ideals.  See \cite[Propositions~2.2 and~2.3]{AsgharzadehTousi} for all these statements.
\end{proof}

The connection with the small finitistic dimension is the principal detection theorem used below.

\begin{theorem}\label{thm:fPD-grade}
For every commutative ring $A$,
\begin{equation}\label{eq:fPD-grade}
 \fPD(A)=\sup\{\Kgr_A(\mathfrak m,A)\mid \mathfrak m\in\Max(A)\}.
\end{equation}
If $A$ is Noetherian, then
\begin{equation}\label{eq:fPD-depth}
 \fPD(A)=\sup\{\depth A_{\mathfrak m}\mid \mathfrak m\in\Max(A)\}.
\end{equation}
\end{theorem}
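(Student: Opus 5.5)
The first formula \eqref{eq:fPD-grade} is the detection theorem of \cite{ZhangWang,ZhangII} quoted in the introduction, and I would take it from there. For a self-contained argument I would prove two inequalities.

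For the lower bound, fix a maximal ideal $\mathfrak m$ and a finitely generated $I=(\mathbf a)\subseteq\mathfrak m$ with $\Kgr_A(I,A)=g<\infty$. The Koszul complex $K_\bullet(\mathbf a;A)$ is a finite free resolution-type complex. I would truncate it, or pass to a suitable cokernel of its dual, to obtain a module $M\in\FPR(A)$ with $\pd_A M=g$. The cleanest route is the standard one: $\Ext^g_A(A/I,A)\neq0$ and Ext-vanishing below $g$ (Proposition~\ref{prop:grade-properties}(v)). Dualizing $K_\bullet$ gives an exact complex up to degree $g$, and the cokernel $M$ at degree $g$ has a finite free resolution of length $g$. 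Its projective dimension is exactly $g$, since the last map is not split: its entries lie in $\mathfrak m$. If $\Kgr_A(\mathfrak m,A)$ is infinite, one uses arbitrarily large finite values.

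For the upper bound, take $M\in\FPR(A)$ with $\pd M=g$ and minimal finite free resolution $0\to F_g\xrightarrow{\varphi}F_{g-1}\to\cdots$ (after a stable adjustment, since projectives need not be free). Then $\Ext^g_A(M,A)\neq0$. Let $I$ be an appropriate Fitting/McCoy-type ideal of $\varphi$, which is finitely generated. The Buchsbaum--Eisenbud/Northcott acyclicity criterion for finite free complexes (valid non-Noetherian via polynomial grade) gives $\Kgr(I,A)\ge$ the position. Nonsplitness of $\varphi$ forces $I$ to be proper, hence contained in some maximal $\mathfrak m$. Monotonicity (Proposition~\ref{prop:grade-properties}(iv)) then yields $g\le\Kgr_A(\mathfrak m,A)$. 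This step is the main obstacle: getting the exact grade bound $\ge g$ rather than something weaker requires Northcott's theory of finite free resolutions over arbitrary commutative rings. I would cite it rather than reprove it, which is another reason to rely on \cite{ZhangWang,ZhangII}.

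For \eqref{eq:fPD-depth}, assume $A$ is Noetherian. Then $\mathfrak m$ is finitely generated, so $\Kgr_A(\mathfrak m,A)=\Egr_A(\mathfrak m,A)=\inf\{i\mid\Ext^i_A(A/\mathfrak m,A)\neq0\}$ by Proposition~\ref{prop:grade-properties}(v). Since $A/\mathfrak m$ is supported only at $\mathfrak m$, localization gives $\Ext^i_A(A/\mathfrak m,A)_{\mathfrak m}=\Ext^i_{A_\mathfrak m}(\kappa(\mathfrak m),A_\mathfrak m)$. This group equals $\Ext^i_A(A/\mathfrak m,A)$ because the latter is already an $A/\mathfrak m$-module. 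The least nonvanishing index is $\depth A_{\mathfrak m}$ by Rees' theorem. Substituting into \eqref{eq:fPD-grade} finishes the proof.
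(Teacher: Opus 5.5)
Your proposal is correct and follows the paper's proof. The paper likewise quotes \cite{ZhangWang,ZhangII} for \eqref{eq:fPD-grade}, and it reduces \eqref{eq:fPD-depth} to the identity $\Kgr_A(\mathfrak m,A)=\depth A_{\mathfrak m}$ for Noetherian $A$, citing Bruns--Herzog; your Ext-grade, localization and Rees argument simply unpacks that identity. Your optional self-contained sketch of \eqref{eq:fPD-grade} is also sound in outline, though the paper does not attempt it: the lower bound comes from the cokernel of the truncated dual Koszul complex, and the upper bound from Northcott's non-Noetherian acyclicity criterion applied to the maximal minors of the last map, after stably adjusting $M$ so that it has a finite free resolution.
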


\begin{proof}
Formula \eqref{eq:fPD-grade} is the Koszul-grade form of the characterization in \cite[Theorem~3.1]{ZhangWang}; see also \cite[Theorem~3.4]{ZhangII}.  In the Noetherian case, the Koszul grade of a maximal ideal equals the depth of the corresponding local ring \cite[Section~1.2]{BrunsHerzog}.
\end{proof}

A complementary Ext criterion will distinguish the cases $n=1$ and $n\geq2$.

\begin{theorem}\label{thm:Ext-detection}
Let $A$ be a commutative ring and let $d\geq0$.  Then $\fPD(A)\leq d$ if and only if, for every finitely generated ideal $I$ of $A$, the vanishing $\Ext_A^i(A/I,A)=0$ for $0\leq i\leq d$ implies $\Ext_A^i(A/I,A)=0$ for all $i\geq0$.
\end{theorem}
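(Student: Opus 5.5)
We plan to reduce Theorem~\ref{thm:Ext-detection} to the grade formula \eqref{eq:fPD-grade} of Theorem~\ref{thm:fPD-grade}, together with Proposition~\ref{prop:grade-properties}(v). For a finitely generated ideal $I$, the Ext vanishing $\Ext_A^i(A/I,A)=0$ for all $i\geq0$ says exactly that $\Egr_A(I,A)=\infty$, which by (v) means $\Kgr_A(I,A)=\infty$. Likewise, vanishing for $0\leq i\leq d$ says $\Kgr_A(I,A)\geq d+1$. So the criterion in the statement can be restated as follows: every finitely generated ideal $I$ with $\Kgr_A(I,A)>d$ has $\Kgr_A(I,A)=\infty$.

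For the forward direction, assume $\fPD(A)\leq d$ and let $I$ be finitely generated with $\Kgr_A(I,A)\geq d+1$. If $I=A$, then $A/I=0$ and all Ext groups vanish. Otherwise $I\subseteq\mathfrak m$ for some maximal ideal $\mathfrak m$. Monotonicity, Proposition~\ref{prop:grade-properties}(iv), gives $\Kgr_A(\mathfrak m,A)\geq d+1$, and \eqref{eq:fPD-grade} then gives $\fPD(A)\geq d+1$, a contradiction. Hence $I=A$, and the implication holds. (In fact this argument shows that no proper finitely generated ideal has grade exceeding $d$.)

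For the converse, assume the Ext condition and suppose $\fPD(A)>d$. By \eqref{eq:fPD-grade} some maximal ideal satisfies $\Kgr_A(\mathfrak m,A)\geq d+1$. By the definition of Koszul grade for arbitrary ideals as a supremum, there is a finitely generated $I\subseteq\mathfrak m$ with $\Kgr_A(I,A)\geq d+1$. The hypothesis forces $\Kgr_A(I,A)=\infty$, so all $\Ext_A^i(A/I,A)$ vanish.

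This last step is where we expect the main obstacle, because it is not by itself a contradiction: proper finitely generated ideals of infinite grade do exist in non-Noetherian rings. To handle it, we will use the standard fact (Asgharzadeh--Tousi, or Northcott's theory of polynomial grade) that $\Kgr_A(I,A)=\infty$ for a finitely generated proper $I$ is impossible. The reason is that the Koszul cohomology satisfies $H^s(K^\bullet(\mathbf a;A))=A/I\neq0$ in top degree $s$, where $s$ is the number of generators. Since $I$ is proper, $A/I\neq0$, so $\Kgr_A(I,A)\leq s<\infty$, and by (v) this bound transfers to Ext grade.

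With this bound the converse closes: $I$ proper forces a contradiction, so $\fPD(A)\leq d$. We should double-check the top-degree identification $H^s(K^\bullet)=A/(\mathbf a)$, which follows from $K^\bullet(\mathbf a;A)\cong K_\bullet(\mathbf a;A)$ up to reindexing (self-duality of the Koszul complex), so $H^s\cong H_0=A/I$. The converse direction is thus the only place where a genuinely new input, beyond the results quoted earlier in the paper, is needed.
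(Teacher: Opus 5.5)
Your proof is correct. It takes a different route from the paper, which gives no argument and simply imports the criterion from \cite[Theorem~2.5]{ZhangIII}.

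You derive the criterion from the grade formula \eqref{eq:fPD-grade}, using three further inputs:
\begin{enumerate}
\item monotonicity and the equality $\Kgr=\Egr$ for finitely generated ideals, from Proposition~\ref{prop:grade-properties}(iv),(v);
\item the supremum definition of Koszul grade for $\mathfrak m$;
\item the bound $\Kgr_A(I,A)\leq s$ for a proper ideal generated by $s$ elements. This follows from $H^s(K^\bullet(\mathbf a;A))\cong A/I$. You can also see it directly: the last cochain map $A^s\to A$ is $(b_i)\mapsto\sum\pm a_ib_i$, whose image is $I$.
\end{enumerate}

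What your route buys is transparency. The conclusion of the implication, vanishing in all degrees, can hold only for $I=A$. So for proper $I$ the implication holds exactly when its hypothesis fails. The criterion is therefore the statement that $\Egr_A(I,A)\leq d$ for every proper finitely generated $I$. The theorem then becomes the identity
\[
 \fPD(A)=\sup\{\Kgr_A(I,A)\mid I\subsetneq A\text{ finitely generated}\},
\]
which is just \eqref{eq:fPD-grade} unwound through the supremum definition of $\Kgr$. The cost is that the result is no longer independent of \eqref{eq:fPD-grade}, whereas the citation keeps the two characterizations as separate inputs.

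One correction to your narrative. You claim that proper finitely generated ideals of infinite grade exist in non-Noetherian rings. That is false, and it contradicts the bound you prove in the next sentence. For proper finitely generated $I$, $\Kgr_A(I,A)=\Egr_A(I,A)$ is always at most the number of generators. What can have infinite grade on $A$ is a proper ideal that is not finitely generated, such as $(x_1,x_2,\ldots)$ in $k[x_1,x_2,\ldots]$. A finitely generated $I$ can also have infinite grade on a module $N$ with $IN=N$. So the step you flagged is not a real obstacle: the contradiction is immediate once the bound is in hand.
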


\begin{proof}
This is \cite[Theorem~2.5]{ZhangIII}.
\end{proof}

\subsection{Higher presentations and idealizations}

The cyclic test considered later depends on how many finite syzygies a quotient possesses.  We therefore fix the presentation terminology explicitly.

\begin{definition}\label{def:n-presented}
Let $n\geq0$.  An $A$-module $E$ is \emph{$n$-presented} if there is an exact sequence
\[
 F_n\longrightarrow F_{n-1}\longrightarrow\cdots\longrightarrow F_0
 \longrightarrow E\longrightarrow0
\]
in which every $F_i$ is finitely generated free.  Thus $0$-presented means finitely generated and $1$-presented means finitely presented.
\end{definition}

Costa calls $A$ an $(n,d)$-ring when every $n$-presented module has projective dimension at most $d$ \cite{Costa}.  Zhou's weak version replaces projective dimension by flat dimension \cite{Zhou}.  To avoid ambiguity between these notions, we use the following explicit name for the cyclic condition introduced by Mahdou \cite{Mahdou}.

\begin{definition}\label{def:cyclic-weak}
A ring $A$ is a \emph{cyclic weak $(n,d)$-ring} if $\pd_A E\leq d$ for every $n$-presented cyclic $A$-module $E$.
\end{definition}

The defining ideal of a cyclic module acquires one fewer finite syzygy.

\begin{lemma}\label{lem:cyclic-ideal}
Let $A$ be a ring and let $I$ be an ideal of $A$.  If $A/I$ is $n$-presented for some $n\geq2$, then $I$ is finitely presented.
\end{lemma}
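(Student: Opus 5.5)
The plan is to extract a finite presentation of $I$ from the beginning of the resolution of $A/I$, using Schanuel's lemma to pass from the given (arbitrary) free module $F_0$ to the canonical surjection $A\to A/I$.

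Since $A/I$ is $n$-presented with $n\geq2$, it is in particular $2$-presented. So there is an exact sequence
\[
 F_2\xrightarrow{\partial_2}F_1\xrightarrow{\partial_1}F_0\xrightarrow{\varepsilon}A/I\longrightarrow0
\]
with each $F_i$ finitely generated free. Put $K=\ker\varepsilon=\operatorname{im}\partial_1$. The truncated sequence $F_2\to F_1\to K\to0$ is exact, so $K$ is finitely presented.

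Next we compare $K$ with $I$. We have two short exact sequences
\[
 0\to K\to F_0\to A/I\to0,\qquad 0\to I\to A\to A/I\to0,
\]
and Schanuel's lemma gives $K\oplus A\cong I\oplus F_0$.

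Finally, finite presentation is closed under finite direct sums and under direct summands, since a summand of a finitely presented module is finitely presented. Hence $K\oplus A$ is finitely presented, and therefore so is its summand $I$. This is the conclusion of Lemma~\ref{lem:cyclic-ideal}.

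The only point needing care is the direct-summand step. One clean argument: if $M=I\oplus P$ is finitely presented, then $I$ is finitely generated as a quotient of $M$. For any surjection $A^k\to I$, add the identity on $P$ to get a surjection $A^k\oplus P\to M$. Its kernel equals the kernel of $A^k\to I$, and this kernel is finitely generated because $M$ is finitely presented and $A^k\oplus P$ is finitely generated; the kernel of any surjection from a finitely generated module onto a finitely presented one is finitely generated. Beyond this, the argument is routine. More generally, the same reasoning shows that $I$ is $(n-1)$-presented.
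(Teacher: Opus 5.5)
Your proposal is correct and follows the paper's proof essentially verbatim. You reduce to $n=2$, observe that $K=\ker(F_0\to A/I)$ is finitely presented, apply Schanuel's lemma to get $I\oplus F_0\cong K\oplus A$, and conclude because direct summands of finitely presented modules are finitely presented. The only difference is that you prove the summand step directly instead of citing it, and that argument is sound.
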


\begin{proof}
It is enough to consider $n=2$.  Choose an exact sequence $F_2\to F_1\to F_0\to A/I\to0$ with the $F_i$ finitely generated free, and put $K=\ker(F_0\to A/I)$.  The induced sequence $F_2\to F_1\to K\to0$ is a finite presentation of $K$.  Comparing $0\to I\to A\to A/I\to0$ with $0\to K\to F_0\to A/I\to0$, Schanuel's lemma gives $I\oplus F_0\cong K\oplus A$.  Hence $I$ is a direct summand of a finitely presented module and is finitely presented by \cite[Theorem 2.6.5]{WK24}.
\end{proof}

The examples in both main sections use the idealization construction.  If $D$ is a ring and $E$ is a $D$-module, the \emph{idealization} $D\ltimes E$ is the abelian group $D\oplus E$ with multiplication
\[
 (a,e)(b,f)=(ab,af+be).
\]
The subset $0\ltimes E$ is a square-zero ideal.  An element $(a,e)$ is a unit precisely when $a$ is a unit of $D$, every prime ideal is $\mathfrak p\ltimes E$ for a unique $\mathfrak p\in\Spec(D)$, and $\Spec(D\ltimes E)\to\Spec(D)$ is an order-preserving homeomorphism.  In particular, $D\ltimes E$ is local when $D$ is local.  These and further structural properties are recorded in \cite{AndersonWinders, ElKKM}.

\subsection{Goldman primes and maximal polynomial ideals}

The polynomial problem is governed by a locus that is usually larger than the maximal spectrum.  We recall its definition before using it.

\begin{definition}\label{def:Goldman}
Let $D$ be an integral domain with quotient field $K$.  The domain $D$ is a \emph{G-domain}, or \emph{Goldman domain}, if $K$ is a finitely generated $D$-algebra.  Equivalently, $K=D[1/t]$ for some nonzero $t\in D$.  A prime ideal $\mathfrak p$ of a ring $R$ is a \emph{Goldman prime} if $R/\mathfrak p$ is a G-domain.  We write
\[
 \GSpec(R)=\{\mathfrak p\in\Spec(R)\mid R/\mathfrak p\text{ is a G-domain}\}.
\]
\end{definition}

The equivalence in the domain definition follows by multiplying the denominators of finitely many algebra generators.  The connection with polynomial maximal ideals is due to Goldman and Kaplansky \cite{Goldman,Kaplansky}.

\begin{proposition}\label{prop:Goldman-contraction}
For a prime ideal $\mathfrak p$ of a ring $R$, the following conditions are equivalent.
\begin{enumerate}[label=\textup{(\roman*)}]
\item $\mathfrak p\in\GSpec(R)$.
\item There is a maximal ideal $\mathfrak M$ of $R[X]$ such that $\mathfrak M\cap R=\mathfrak p$.
\end{enumerate}
\end{proposition}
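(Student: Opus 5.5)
The plan is to reduce both implications to the domain $D=R/\mathfrak p$ with quotient field $K$, using $R[X]/\mathfrak pR[X]\cong D[X]$. Maximal ideals $\mathfrak M$ of $R[X]$ with $\mathfrak M\cap R=\mathfrak p$ correspond bijectively to maximal ideals $\mathfrak n$ of $D[X]$ with $\mathfrak n\cap D=0$. So it suffices to prove the following: $D$ is a G-domain if and only if $D[X]$ has a maximal ideal meeting $D$ trivially. This is Kaplansky's theorem, and I would give its proof directly.

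For (i)$\Rightarrow$(ii), I would choose a nonzero $t\in D$ with $K=D[1/t]$. Consider the surjection $D[X]\to D[1/t]=K$ sending $X\mapsto 1/t$. Its kernel $\mathfrak n$ is maximal, because the image is a field. It also meets $D$ trivially, because $D\to K$ is injective. Pulling $\mathfrak n$ back along $R[X]\to D[X]$ gives the required $\mathfrak M$.

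For (ii)$\Rightarrow$(i), let $\mathfrak n$ be maximal in $D[X]$ with $\mathfrak n\cap D=0$, and put $L=D[X]/\mathfrak n$. This $L$ is a field containing $D$ and generated over $D$ by the image $u$ of $X$. The ideal $\mathfrak n$ is nonzero: if $\mathfrak n=0$, then $D[X]$ would be a field, which is impossible. So $u$ satisfies a nonzero polynomial over $D$ and is algebraic over $K$. Let $a\in D$ be the nonzero leading coefficient of such a polynomial. Then $u$, and hence all of $D[u]$, is integral over $D[1/a]$. Since $L=D[u]$ is a field integral over $D[1/a]$, the ring $D[1/a]$ is itself a field. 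Indeed, for nonzero $y\in D[1/a]$, the inverse $y^{-1}\in L$ is integral over $D[1/a]$, and the standard trick of multiplying its integral equation by $y^{m-1}$ puts $y^{-1}$ in $D[1/a]$. A field between $D$ and $K$ containing $D$ must be $K$, so $K=D[1/a]$ and $D$ is a G-domain.

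The only step requiring care is this last one: passing from "$L$ is a field finite over $D[1/a]$" to "$D[1/a]$ is a field". The integrality argument above handles it cleanly, so I do not expect a genuine obstacle. Alternatively, the whole proposition can simply be cited from \cite{Kaplansky}, Theorems 18 and 24.
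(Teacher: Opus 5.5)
Your proposal is correct and takes the same route as the paper: pass to $D=R/\mathfrak p$ through the correspondence between maximal ideals of $R[X]$ lying over $\mathfrak p$ and maximal ideals of $D[X]$ meeting $D$ trivially, then apply Kaplansky's criterion. The only difference is that you prove that criterion yourself, using the map $X\mapsto 1/t$ for one direction and integrality over $D[1/a]$ for the other, whereas the paper simply cites Kaplansky's Theorem~24.
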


\begin{proof}
Put $D=R/\mathfrak p$, and let $\pi:R[X]\twoheadrightarrow D[X]$ be the natural map.  Kaplansky's characterization says that a domain $D$ is a G-domain if and only if $D[X]$ has a maximal ideal $\mathfrak N$ with $\mathfrak N\cap D=0$; see \cite[Theorem~24]{Kaplansky}.

If $D$ is a G-domain, choose such an ideal $\mathfrak N$ and put $\mathfrak M=\pi^{-1}(\mathfrak N)$.  Then $\mathfrak M$ is maximal and $\mathfrak M\cap R=\mathfrak p$.  Conversely, if $\mathfrak M\in\Max(R[X])$ contracts to $\mathfrak p$, then $\mathfrak pR[X]\subseteq\mathfrak M$, and $\mathfrak N=\mathfrak M/\mathfrak pR[X]$ is maximal in $D[X]$ with
\[
 \mathfrak N\cap D=(\mathfrak M\cap R)/\mathfrak p=0.
\]
Kaplansky's criterion now shows that $D$ is a G-domain.
\end{proof}

A ring is \emph{Jacobson} if every prime ideal is the intersection of the maximal ideals containing it.  Equivalently, every maximal ideal of $R[X]$ contracts to a maximal ideal of $R$; see \cite{Goldman,Kaplansky}.

\begin{corollary}\label{cor:GSpec-Jacobson}
Every maximal ideal of $R$ belongs to $\GSpec(R)$, and $R$ is Jacobson if and only if $\GSpec(R)=\Max(R)$.
\end{corollary}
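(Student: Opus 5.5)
The corollary has two claims. I will deduce both from Proposition~\ref{prop:Goldman-contraction} together with the characterization of Jacobson rings recalled just before the statement: $R$ is Jacobson exactly when every maximal ideal of $R[X]$ contracts to a maximal ideal of $R$.

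For the first claim, let $\mathfrak m\in\Max(R)$. Then $D=R/\mathfrak m$ is a field, so its quotient field is $D$ itself. This is trivially a finitely generated $D$-algebra; equivalently $K=D[1/1]$. Hence $D$ is a G-domain and $\mathfrak m\in\GSpec(R)$. One could also argue through Proposition~\ref{prop:Goldman-contraction}: the ideal $\mathfrak M=\mathfrak mR[X]+XR[X]$ is maximal, because $R[X]/\mathfrak M\cong R/\mathfrak m$, and it contracts to $\mathfrak m$. Either way we get $\Max(R)\subseteq\GSpec(R)$.

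For the second claim, I prove each direction.

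\emph{Jacobson implies $\GSpec(R)=\Max(R)$.} Suppose $R$ is Jacobson and take $\mathfrak p\in\GSpec(R)$. By Proposition~\ref{prop:Goldman-contraction} there is a maximal ideal $\mathfrak M$ of $R[X]$ with $\mathfrak M\cap R=\mathfrak p$. By the Jacobson characterization, $\mathfrak M\cap R$ is maximal, so $\mathfrak p\in\Max(R)$. Combined with the first claim, this gives $\GSpec(R)=\Max(R)$.

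\emph{$\GSpec(R)=\Max(R)$ implies Jacobson.} Let $\mathfrak M\in\Max(R[X])$. By Proposition~\ref{prop:Goldman-contraction}, $\mathfrak M\cap R$ lies in $\GSpec(R)=\Max(R)$, so it is maximal. By the characterization recalled from \cite{Goldman,Kaplansky}, $R$ is Jacobson.

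The only nontrivial input is the equivalence between the two definitions of Jacobson rings, which the paper takes from \cite{Goldman,Kaplansky}. If that is not accepted directly, the main obstacle is the direction showing that $\GSpec(R)=\Max(R)$ forces every prime to be an intersection of maximal ideals. I would handle it with the standard fact that every prime $\mathfrak p$ is the intersection of the Goldman primes containing it. To prove that fact: for $a\notin\mathfrak p$, take a prime $\mathfrak q\supseteq\mathfrak p$ maximal with respect to avoiding the powers of $a$. Then $R/\mathfrak q$ is a G-domain, since every nonzero prime of $R/\mathfrak q$ contains the image of $a$, so its quotient field is obtained by inverting $\bar a$. This exhibits a Goldman prime containing $\mathfrak p$ but not $a$, and intersecting over all such $a$ gives the fact.
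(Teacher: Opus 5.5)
Your proof is correct and follows the paper's argument: a field is trivially a G-domain, and Proposition~\ref{prop:Goldman-contraction} turns $\GSpec(R)=\Max(R)$ into the statement that every maximal ideal of $R[X]$ contracts to a maximal ideal of $R$, which is the polynomial characterization of Jacobson rings that the paper also takes from \cite{Goldman,Kaplansky}. Your optional supplement, that every prime is the intersection of the Goldman primes containing it, goes beyond the paper and correctly proves the direction $\GSpec(R)=\Max(R)\Rightarrow R$ Jacobson in the intersection sense. The converse in that setting, which you leave unstated, follows because a G-domain with zero Jacobson radical is a field.
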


\begin{proof}
The quotient by a maximal ideal is a field and hence a G-domain.  By Proposition~\ref{prop:Goldman-contraction}, the equality $\GSpec(R)=\Max(R)$ is equivalent to the assertion that every maximal ideal of $R[X]$ contracts to a maximal ideal of $R$, which is the polynomial characterization of Jacobson rings.
\end{proof}

\section{Polynomial extensions and the Goldman locus}\label{sec:polynomial}

This section determines what a maximal ideal of $R[X]$ contributes to the small finitistic dimension.  The regular variable gives a universal lower bound.  When contractions are maximal the bound is exact, while in the Noetherian case the full answer is obtained by allowing all Goldman contractions.

\subsection{The universal lower bound and the Jacobson equality}

The variable itself produces one additional unit of Koszul grade.

\begin{theorem}\label{thm:polynomial-lower}  \cite[Proposition 4.1]{ZhangII}
For every commutative ring $R$, one has $\fPD(R[X])\geq\fPD(R)+1$.
\end{theorem}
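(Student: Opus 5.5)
The plan is to use the grade description of Theorem~\ref{thm:fPD-grade}: it is enough to attach to each maximal ideal $\mathfrak m$ of $R$ a maximal ideal $\mathfrak M$ of $R[X]$ with
\[
 \Kgr_{R[X]}(\mathfrak M,R[X])\geq \Kgr_R(\mathfrak m,R)+1 .
\]
Taking suprema over $\Max(R)$ then gives the claim, and it is automatic when $\fPD(R)=\infty$ under the convention $\infty+1=\infty$. The natural choice is $\mathfrak M=\mathfrak mR[X]+XR[X]=(\mathfrak m,X)$. This ideal is maximal because $R[X]/(\mathfrak m,X)\cong R/\mathfrak m$.

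The first step uses the regular element $X$. It lies in $\mathfrak M$ and is a nonzerodivisor on $R[X]$. Proposition~\ref{prop:grade-properties}(i) therefore gives
\[
 \Kgr_{R[X]}(\mathfrak M,R[X])=1+\Kgr_{R[X]/(X)}\bigl(\mathfrak M/(X),R[X]/XR[X]\bigr).
\]
Under the identification $R[X]/(X)\cong R$, the ideal $\mathfrak M/(X)$ corresponds to $\mathfrak m$ and the module to $R$. So the right-hand side equals $1+\Kgr_R(\mathfrak m,R)$, which in fact gives equality for this particular $\mathfrak M$.

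The one point needing care is that Proposition~\ref{prop:grade-properties}(i) holds for an arbitrary (possibly non-finitely generated) ideal $J$ with the sup-definition of Koszul grade. That is how it is stated, so it can be quoted. If one preferred to work with finitely generated ideals, one would write $\Kgr_R(\mathfrak m,R)=\sup\Kgr_R(I,R)$ over finitely generated $I\subseteq\mathfrak m$. One would then apply the finitely generated case to $(IR[X],X)\subseteq\mathfrak M$ together with monotonicity, Proposition~\ref{prop:grade-properties}(iv).

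Combining the two steps, $\fPD(R[X])\geq\sup_{\mathfrak m}\Kgr_{R[X]}((\mathfrak m,X),R[X])=1+\fPD(R)$. I do not expect a real obstacle. The only subtlety is checking that the regular-element reduction is compatible with the sup-definition for non-finitely generated maximal ideals, and Proposition~\ref{prop:grade-properties}(i) already covers that.
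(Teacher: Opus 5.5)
Your proposal is correct and is essentially the paper's proof: the paper also takes $\mathfrak M=\mathfrak mS+XS$, uses that $X$ is $S$-regular together with Proposition~\ref{prop:grade-properties}(i) to get $\Kgr_S(\mathfrak M,S)=1+\Kgr_R(\mathfrak m,R)$, and then takes suprema via Theorem~\ref{thm:fPD-grade}. Your alternative reduction to finitely generated subideals $I\subseteq\mathfrak m$ via monotonicity is also valid, but the paper does not need it, since it quotes part (i) for arbitrary ideals.
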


\begin{proof}
Set $S=R[X]$.  For $\mathfrak m\in\Max(R)$, the ideal $\mathfrak M=\mathfrak mS+XS$ is maximal, and $X$ is $S$-regular.  Proposition~\ref{prop:grade-properties} gives
\[
 \Kgr_S(\mathfrak M,S)
 =1+\Kgr_R(\mathfrak m,R).
\]
Taking the supremum over $\mathfrak m$ and applying Theorem~\ref{thm:fPD-grade} proves the inequality.
\end{proof}

When every maximal ideal upstairs lies over a maximal ideal downstairs, the same calculation becomes an equality at each point.

\begin{theorem}\label{thm:polynomial-Jacobson}  \cite[Theorem 4.2]{ZhangII}
Let $R$ be Jacobson.  If $\mathfrak M\in\Max(R[X])$ and $\mathfrak m=\mathfrak M\cap R$, then
\begin{equation}\label{eq:pointwise-Jacobson}
 \Kgr_{R[X]}(\mathfrak M,R[X])
 =1+\Kgr_R(\mathfrak m,R).
\end{equation}
Consequently, $\fPD(R[X])=\fPD(R)+1$.
\end{theorem}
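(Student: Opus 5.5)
The plan is to prove the pointwise formula \eqref{eq:pointwise-Jacobson} and then take suprema. Fix $\mathfrak M\in\Max(R[X])$. Since $R$ is Jacobson, Corollary~\ref{cor:GSpec-Jacobson} together with Proposition~\ref{prop:Goldman-contraction} shows that $\mathfrak m=\mathfrak M\cap R$ is maximal in $R$. Put $k=R/\mathfrak m$. Then $\mathfrak M/\mathfrak mR[X]$ is a maximal ideal of $k[X]$, hence principal, generated by the image of a monic polynomial $\bar f$. Lifting $\bar f$ to a monic $f\in R[X]$ gives
\[
\mathfrak M=\mathfrak mR[X]+fR[X].
\]
Because $f$ is monic, it is a nonzerodivisor on $R[X]$. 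Moreover $R[X]/fR[X]$ is a free $R$-module of rank $\deg f$, so the extension $R\to B:=R[X]/(f)$ is finite and free, hence faithfully flat.

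Next I apply Proposition~\ref{prop:grade-properties}(i) with the $R[X]$-regular element $f\in\mathfrak M$. This gives
\[
\Kgr_{R[X]}(\mathfrak M,R[X])=1+\Kgr_B(\mathfrak MB,B)=1+\Kgr_B(\mathfrak mB,B).
\]
Since $R\to B$ is faithfully flat and $B=R\otimes_R B$, part (iii) of the same proposition yields $\Kgr_B(\mathfrak mB,B)=\Kgr_R(\mathfrak m,R)$. This is exactly \eqref{eq:pointwise-Jacobson}. One must check that (iii) is applied with $N=R$, so that $N\otimes_RB=B$ and the ideal $JB=\mathfrak mB$; both are immediate.

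For the consequence, take the supremum over $\mathfrak M\in\Max(R[X])$ and use Theorem~\ref{thm:fPD-grade}. Each term equals $1+\Kgr_R(\mathfrak m,R)\le 1+\fPD(R)$, so $\fPD(R[X])\le\fPD(R)+1$. The reverse inequality is Theorem~\ref{thm:polynomial-lower}.

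The only genuine point is the reduction to $\mathfrak M=\mathfrak m R[X]+(f)$ with $f$ monic. It rests on the Jacobson hypothesis, which forces the contraction to be maximal so that the fibre is the PID $k[X]$. The rest is a direct application of the listed grade properties. In the non-Jacobson case this step fails, because the contraction may be a nonmaximal Goldman prime. That failure is what the Noetherian formula in part (iii) of Theorem~\ref{thm:intro-polynomial} addresses.
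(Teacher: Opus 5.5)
Your proposal is correct and follows the paper's proof essentially step for step. Both arguments use the Jacobson hypothesis to make $\mathfrak m$ maximal, write $\mathfrak M=\mathfrak mR[X]+fR[X]$ with $f$ monic, and then apply the regular-element shift followed by faithfully flat base change along the finite free extension $R\to R[X]/(f)$. The only difference is in the global equality: you obtain the reverse inequality from Theorem~\ref{thm:polynomial-lower}, while the paper reads both directions directly off formula~\eqref{eq:fPD-grade}. This difference is inessential.
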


\begin{proof}
Put $S=R[X]$.  Since $R$ is Jacobson, $\mathfrak m$ is maximal.  The maximal ideal $\mathfrak M/\mathfrak mS$ of $(R/\mathfrak m)[X]$ is generated by a monic irreducible polynomial.  Choose a monic lift $f\in S$.  Then $\mathfrak M=\mathfrak mS+fS$, the element $f$ is $S$-regular, and $B=S/fS$ is a nonzero finite free, hence faithfully flat, $R$-algebra.  Moreover, $\mathfrak M/fS=\mathfrak mB$.  The regular-element shift and faithfully flat base change give
\[
 \Kgr_S(\mathfrak M,S)
 =1+\Kgr_B(\mathfrak mB,B)
 =1+\Kgr_R(\mathfrak m,R).
\]
This proves \eqref{eq:pointwise-Jacobson}.  Formula \eqref{eq:fPD-grade} now yields the global equality.
\end{proof}

\subsection{The exact Noetherian formula}

For a Noetherian base, the local polynomial depth is always one more
than the depth at the contracted prime.  We give first a proof using the
Koszul-grade machinery developed above and then a direct proof from the
flat local depth formula.

\begin{lemma}\label{lem:depth-poly-local}
Let $R$ be a Noetherian ring, set $S=R[X]$, and let
$\mathfrak M\in\Max(S)$.  If $\mathfrak p=\mathfrak M\cap R$, then
\begin{equation}\label{eq:depth-poly-local}
 \depth S_{\mathfrak M}=\depth R_{\mathfrak p}+1.
\end{equation}
\end{lemma}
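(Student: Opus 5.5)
The plan is to reduce to a local flat extension and apply the flat local depth formula. Put $S=R[X]$, $\mathfrak p=\mathfrak M\cap R$, and $k=\kappa(\mathfrak p)=R_{\mathfrak p}/\mathfrak pR_{\mathfrak p}$.

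First I localize. The map $R\to S$ is flat, so $R_{\mathfrak p}\to S_{\mathfrak M}$ is a flat local homomorphism of Noetherian local rings. Its closed fiber is
\[
 S_{\mathfrak M}/\mathfrak pS_{\mathfrak M}\cong (k[X])_{\mathfrak N},
\]
where $\mathfrak N$ is the image of $\mathfrak M$ in $k[X]=S_{\mathfrak p}/\mathfrak pS_{\mathfrak p}$. The standard formula \cite[Proposition~1.2.16]{BrunsHerzog} then gives
\[
 \depth S_{\mathfrak M}=\depth R_{\mathfrak p}+\depth (k[X])_{\mathfrak N}.
\]
It therefore suffices to show that $\depth (k[X])_{\mathfrak N}=1$.

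The key step is to check that $\mathfrak N$ is a nonzero prime of $k[X]$, so that $(k[X])_{\mathfrak N}$ is a DVR and has depth $1$. Primality is automatic, since $\mathfrak N$ corresponds to $\mathfrak M$, and $\mathfrak M$ is disjoint from $R\setminus\mathfrak p$. The only real point is that $\mathfrak N\neq0$, that is, $\mathfrak M\neq\mathfrak pS$. Suppose instead that $\mathfrak M=\mathfrak pS$. Then $S/\mathfrak M\cong (R/\mathfrak p)[X]$ would be a field, which is impossible because $X$ is not invertible in a polynomial ring over a nonzero domain. So $\mathfrak M\supsetneq\mathfrak pS$, and the fiber is a one-dimensional regular local ring (a localization of the PID $k[X]$ at a nonzero prime). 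Its depth is $1$, which proves \eqref{eq:depth-poly-local}.

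As a second route, matching the Koszul-grade machinery of Proposition~\ref{prop:grade-properties}, I would work in $S_{\mathfrak M}$ directly. Write $\mathfrak MS_{\mathfrak M}=\mathfrak pS_{\mathfrak M}+fS_{\mathfrak M}$, where $f$ lifts a generator of $\mathfrak N$. Take $r=\depth R_{\mathfrak p}$ and an $R_{\mathfrak p}$-regular sequence $x_1,\dots,x_r$ in $\mathfrak pR_{\mathfrak p}$. By flatness this sequence stays regular on $S_{\mathfrak M}$, and we may pass to $\bar R=R_{\mathfrak p}/(\mathbf x)$, which has depth $0$. It then remains to show that $\bar S_{\mathfrak M}=S_{\mathfrak M}/(\mathbf x)$ has depth exactly $1$. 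The lower bound: $f$ is regular on $\bar S_{\mathfrak M}$, because it is regular modulo $\mathfrak p$ and flatness plus the local criterion passes this to $\bar S_{\mathfrak M}$. The upper bound: $\mathfrak pR_{\mathfrak p}\in\operatorname{Ass}\bar R$ is preserved by flat base change, so $\mathfrak pS_{\mathfrak M}$ is associated and $\bar S_{\mathfrak M}/f$ has depth $0$. This route needs more bookkeeping, so I would present the flat-fiber argument as the main proof.

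I expect the main obstacle to be this fiber identification, specifically ruling out $\mathfrak M=\mathfrak pS$. That is exactly where maximality of $\mathfrak M$ is used, and it is done by the field argument above.
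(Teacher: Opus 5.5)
Your main argument is correct and is essentially the paper's alternative proof: the flat local homomorphism $R_{\mathfrak p}\to S_{\mathfrak M}$ has closed fiber $\kappa(\mathfrak p)[X]_{\mathfrak N}$, which is a DVR, and the flat local depth formula finishes. The one difference there is that you get $\mathfrak N\neq 0$ from $\mathfrak M\neq\mathfrak pS$, whereas the paper notes that $\mathfrak N$ is maximal in $\kappa(\mathfrak p)[X]$. Your secondary route parallels the paper's primary Koszul-grade proof, but the paper takes $f$ monic and kills it first, so that regularity of $f$ and faithful flatness of $A[X]/(f)$ over $A$ are immediate and no local flatness criterion is needed.
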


\begin{proof}
Set $A=R_{\mathfrak p}$ and let
$\mathfrak N=\mathfrak M A[X]$.  Since
$R\setminus\mathfrak p$ is disjoint from $\mathfrak M$, the ideal
$\mathfrak N$ is maximal and
\[
 S_{\mathfrak M}\cong A[X]_{\mathfrak N}.
\]
Moreover, $\mathfrak N\cap A=\mathfrak pA$, and
$\mathfrak N/\mathfrak pA[X]$ is a maximal ideal of
$\kappa(\mathfrak p)[X]$.  Choose a monic polynomial
$f\in A[X]$ whose image generates
$\mathfrak N/\mathfrak pA[X]$.  Then
\[
 \mathfrak N=\mathfrak pA[X]+fA[X].
\]

The monic polynomial $f$ is regular on $A[X]$.  Put
$C=A[X]/fA[X]$.  Then $C$ is a nonzero finite free, and hence
faithfully flat, $A$-algebra, and
$\mathfrak N/fA[X]=\mathfrak pC$.  Since $\mathfrak N$ and
$\mathfrak pA$ are maximal ideals and all the rings involved are
Noetherian, the regular-element shift and faithfully flat invariance
of Koszul grade give
\[
 \begin{aligned}
 \depth A[X]_{\mathfrak N}
 &=\Kgr_{A[X]}(\mathfrak N,A[X])\\
 &=1+\Kgr_C(\mathfrak pC,C)\\
 &=1+\Kgr_A(\mathfrak pA,A)\\
 &=1+\depth A.
 \end{aligned}
\]
Since $A=R_{\mathfrak p}$ and
$A[X]_{\mathfrak N}\cong S_{\mathfrak M}$, the result follows.
\end{proof}

\begin{proof}[Alternative proof]
Set
\[
 A=R_{\mathfrak p}
 \qquad\text{and}\qquad
 B=S_{\mathfrak M}.
\]
The induced homomorphism $A\to B$ is a flat local homomorphism of
Noetherian local rings.  Indeed, it is obtained from the flat map
$R\to R[X]$ by localization, and the inverse image of the maximal
ideal $\mathfrak M B$ is $\mathfrak pA$.

Let $\mathfrak N=\mathfrak M A[X]$ and put
\[
 \mathfrak n=\mathfrak N/\mathfrak pA[X].
\]
Recall that
$\kappa(\mathfrak p)=R_{\mathfrak p}/\mathfrak pR_{\mathfrak p}
\cong\operatorname{Frac}(R/\mathfrak p)$
is the residue field of $R$ at $\mathfrak p$.
Then $\mathfrak n$ is a maximal ideal of
$\kappa(\mathfrak p)[X]$, and the closed fiber of $A\to B$ is
\[
 B/\mathfrak pAB
 \cong
 \kappa(\mathfrak p)[X]_{\mathfrak n}.
\]
Since $\mathfrak n$ is generated by a nonconstant irreducible
polynomial, this closed fiber is a one-dimensional regular local
ring.  Consequently,
\[
 \depth(B/\mathfrak pAB)=1.
\]
The flat local depth formula
\cite[Lemma~10.163.2, Tag~0337]{Stacks}
therefore yields
\[
 \depth B
 =\depth A+\depth(B/\mathfrak pAB)
 =\depth A+1.
\]
Thus
\[
 \depth S_{\mathfrak M}
 =\depth R_{\mathfrak p}+1.  \qedhere
\]
\end{proof}

The Goldman contraction criterion now turns the pointwise calculation into a global formula.

\begin{theorem}\label{thm:Goldman-depth}
Let $R$ be Noetherian.  Then
\begin{equation}\label{eq:Goldman-depth}
 \fPD(R[X])
 =1+\sup\{\depth R_{\mathfrak p}\mid \mathfrak p\in\GSpec(R)\}.
\end{equation}
\end{theorem}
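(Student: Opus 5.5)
The plan is to combine the Noetherian form of the detection theorem with the pointwise depth computation and the Goldman contraction criterion. Put $S=R[X]$. Since $S$ is Noetherian by the Hilbert basis theorem, formula \eqref{eq:fPD-depth} of Theorem~\ref{thm:fPD-grade} applied to $S$ gives
\[
 \fPD(S)=\sup\{\depth S_{\mathfrak M}\mid \mathfrak M\in\Max(S)\}.
\]
For each $\mathfrak M\in\Max(S)$, Lemma~\ref{lem:depth-poly-local} rewrites the term as $\depth S_{\mathfrak M}=1+\depth R_{\mathfrak M\cap R}$. Hence
\[
 \fPD(S)=1+\sup\{\depth R_{\mathfrak M\cap R}\mid \mathfrak M\in\Max(S)\},
\]
with the convention $\infty+1=\infty$ and using that $\Max(S)\neq\varnothing$ whenever $R\neq0$.

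It then remains to identify the indexing set. I will show that the map $\Max(S)\to\Spec(R)$, $\mathfrak M\mapsto\mathfrak M\cap R$, has image exactly $\GSpec(R)$. This is precisely the equivalence (i)$\Leftrightarrow$(ii) of Proposition~\ref{prop:Goldman-contraction}. Every contraction of a maximal ideal is a Goldman prime, and every Goldman prime is such a contraction. Since the quantity $\depth R_{\mathfrak p}$ depends only on $\mathfrak p$, the supremum over $\mathfrak M$ equals the supremum over $\mathfrak p$ in the image, namely over $\GSpec(R)$. This yields \eqref{eq:Goldman-depth}.

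There is no serious obstacle; the substantive inputs, Lemma~\ref{lem:depth-poly-local} and Kaplansky's criterion, are already in place. The only points to check are minor. First, the supremum manipulation must respect infinite values and the degenerate case $R=0$, where both sides are read by the conventions $\sup\varnothing$ and $\infty+1=\infty$. Second, \eqref{eq:fPD-depth} applies to $S$ even when $\dim R$ is infinite, since it only requires $S$ to be Noetherian.
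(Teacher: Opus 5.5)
Your proposal is correct and matches the paper's proof essentially step for step: apply \eqref{eq:fPD-depth} to the Noetherian ring $R[X]$, rewrite each local depth via Lemma~\ref{lem:depth-poly-local}, and identify the set of contractions $\mathfrak M\cap R$ with $\GSpec(R)$ by Proposition~\ref{prop:Goldman-contraction}. Your remarks on infinite suprema and the degenerate case $R=0$ are harmless additions that the paper leaves implicit.
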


\begin{proof}
By Theorem~\ref{thm:fPD-grade} and Lemma~\ref{lem:depth-poly-local},
\[
 \fPD(R[X])
 =1+\sup\{\depth R_{\mathfrak M\cap R}\mid \mathfrak M\in\Max(R[X])\}.
\]
Proposition~\ref{prop:Goldman-contraction} identifies the set of contractions with $\GSpec(R)$.
\end{proof}

The usual one-step formula is therefore equivalent to a precise depth bound on the Goldman locus.

\begin{corollary}\label{cor:one-step-criterion}
Let $R$ be Noetherian.  Then $\fPD(R[X])=\fPD(R)+1$ if and only if $\depth R_{\mathfrak p}\leq\fPD(R)$ for every $\mathfrak p\in\GSpec(R)$.  In particular, the equality holds for every Noetherian Jacobson ring.
\end{corollary}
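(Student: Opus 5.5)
The plan is to compare the formula of Theorem~\ref{thm:Goldman-depth} with the depth form \eqref{eq:fPD-depth} of Theorem~\ref{thm:fPD-grade}. Put
\[
 g=\sup\{\depth R_{\mathfrak p}\mid \mathfrak p\in\GSpec(R)\},
 \qquad
 f=\fPD(R)=\sup\{\depth R_{\mathfrak m}\mid \mathfrak m\in\Max(R)\}.
\]
Theorem~\ref{thm:Goldman-depth} gives $\fPD(R[X])=1+g$. Corollary~\ref{cor:GSpec-Jacobson} gives $\Max(R)\subseteq\GSpec(R)$, and therefore $f\leq g$. This inequality also recovers Theorem~\ref{thm:polynomial-lower} in the Noetherian case.

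For the equivalence, suppose first that $\depth R_{\mathfrak p}\leq f$ for every Goldman prime $\mathfrak p$. Then $g\leq f$, so $g=f$ and $\fPD(R[X])=1+f=\fPD(R)+1$. Conversely, suppose $\fPD(R[X])=\fPD(R)+1$, so that $1+g=1+f$. The only care needed is cancellation with the convention $\infty+1=\infty$. If $f$ is finite, then $1+g$ is finite, so $g$ is finite and $g=f$. If $f=\infty$, the bound $\depth R_{\mathfrak p}\leq f$ holds trivially. In either case every Goldman prime satisfies $\depth R_{\mathfrak p}\leq g=f$ (or the trivial bound). Note that $f$ can be infinite even for Noetherian $R$ of infinite dimension, which is why the cancellation has to be stated this way and not done by naive subtraction.

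For the final assertion, if $R$ is Noetherian and Jacobson, then Corollary~\ref{cor:GSpec-Jacobson} gives $\GSpec(R)=\Max(R)$. So every Goldman prime is maximal and $\depth R_{\mathfrak p}\leq f$ by the definition of $f$. The criterion then gives the equality, in agreement with Theorem~\ref{thm:polynomial-Jacobson}.

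No step is a genuine obstacle. The only point needing attention is the handling of infinite values in the cancellation, which is dealt with above.
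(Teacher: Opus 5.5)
Your proposal is correct and follows the paper's own proof: it likewise combines the Goldman-depth formula \eqref{eq:Goldman-depth} with the depth form \eqref{eq:fPD-depth} of $\fPD(R)$, using $\Max(R)\subseteq\GSpec(R)$, and deduces the Jacobson case from Corollary~\ref{cor:GSpec-Jacobson}. Your explicit handling of the case $\fPD(R)=\infty$ in the cancellation is a harmless extra detail that the paper leaves implicit.
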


\begin{proof}
Combine \eqref{eq:Goldman-depth} with \eqref{eq:fPD-depth}.  The final assertion follows from Corollary~\ref{cor:GSpec-Jacobson}.
\end{proof}

\subsection{Arbitrarily large local jumps}

A finite idealization can force depth zero at the unique maximal ideal while preserving large depth at a nonmaximal Goldman prime.  This produces every possible positive value of $\fPD(R[X])$ over a Noetherian local base of small finitistic dimension zero.

\begin{theorem}\label{thm:Noetherian-jumps}
Let $k$ be a field and let $m\geq1$.  Put
\[
 D_m=k[u_1,\ldots,u_m,v]_{(u_1,\ldots,u_m,v)},
 \qquad
 E_m=D_m/(u_1,\ldots,u_m,v),
 \qquad
 R_m=D_m\ltimes E_m.
\]
Then $R_m$ is Noetherian local and
\[
 \fPD(R_m)=0,
 \qquad
 \fPD(R_m[X])=m+1.
\]
\end{theorem}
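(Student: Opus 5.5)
Our plan is to compute both sides with the depth formulas \eqref{eq:fPD-depth} and \eqref{eq:Goldman-depth}. First, $R_m$ is Noetherian because $E_m=k$ is a finitely generated $D_m$-module, so $R_m$ is a finite $D_m$-module, hence a Noetherian ring. It is local because $D_m$ is local, by the structural facts on idealizations recalled above; its maximal ideal is $\mathfrak m\ltimes E_m$, where $\mathfrak m=(u_1,\ldots,u_m,v)D_m$. Next, $\depth R_m=0$: the element $(0,\bar 1)$ is nonzero, and it is annihilated by $\mathfrak m\ltimes E_m$ because $(a,e)(0,\bar1)=(0,a\cdot\bar1)=0$ for $a\in\mathfrak m$. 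Thus the maximal ideal is associated, and \eqref{eq:fPD-depth} gives $\fPD(R_m)=0$.

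For $R_m[X]$, we first describe the prime spectrum. The primes of $R_m$ are $\mathfrak q\ltimes E_m$ with $\mathfrak q\in\Spec(D_m)$, and $R_m/(\mathfrak q\ltimes E_m)\cong D_m/\mathfrak q$. Hence $\GSpec(R_m)$ corresponds to $\GSpec(D_m)$. We will localize using the rule $(D\ltimes E)_{\mathfrak q\ltimes E}\cong D_{\mathfrak q}\ltimes E_{\mathfrak q}$. For $\mathfrak q\neq\mathfrak m$ we have $E_{\mathfrak q}=k_{\mathfrak q}=0$, since $k$ is supported only at $\mathfrak m$. Consequently $(R_m)_{\mathfrak q\ltimes E_m}\cong(D_m)_{\mathfrak q}$, which is regular of depth $\htp\mathfrak q$.

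By \eqref{eq:Goldman-depth}, it remains to find the maximum of $\htp\mathfrak q$ over the Goldman primes $\mathfrak q\neq\mathfrak m$ of $D_m$; the maximal ideal itself contributes depth $0$. A local domain of dimension at least two is never a G-domain, because it has infinitely many height-one primes whose intersection is zero, whereas a G-domain has a nonzero element lying in every nonzero prime. Also, a one-dimensional Noetherian local domain is a G-domain, since its only nonzero prime is the maximal ideal, which is the radical of a single nonzero element $t$, giving $\operatorname{Frac}=D[1/t]$. Therefore the nonmaximal Goldman primes are exactly the primes $\mathfrak q$ with $\dim D_m/\mathfrak q=1$. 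Since $D_m$ is a regular local ring of dimension $m+1$, and hence catenary, each such prime has $\htp\mathfrak q=m$. Such primes exist: for example $\mathfrak q=(u_1,\ldots,u_m)$, with $D_m/\mathfrak q\cong k[v]_{(v)}$ a DVR. The supremum is therefore $m$, and $\fPD(R_m[X])=m+1$.

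We expect the main obstacle to be stating cleanly that G-domains of Noetherian type are exactly those with finitely many height-one primes and dimension at most one. To handle this, we would cite Kaplansky's characterization that a Noetherian domain is a G-domain if and only if it is semilocal of dimension at most one. Only the local case is needed here, and the argument sketched above covers it. For $m\geq1$ we could even avoid the negative direction, since it suffices to note that every $\mathfrak q\neq\mathfrak m$ has height at most $m$, and this gives the upper bound directly.
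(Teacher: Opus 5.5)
Your proof is correct and follows the paper's route: depth zero at the maximal ideal via $(0,\bar 1)$, localization killing $E_m$ at every nonmaximal prime, the Goldman prime $(u_1,\ldots,u_m)\ltimes E_m$ contributing depth $m$, and the bound $\htp\mathfrak q\le m$ at all other primes. Your closing remark is exactly the paper's argument. Your exact description of the nonmaximal Goldman primes as those of coheight one is correct but not needed. Also, the claim that a local domain of dimension at least two is never a G-domain requires a Noetherian hypothesis (rank-two valuation domains are counterexamples), which does hold for the quotients $D_m/\mathfrak q$ you apply it to.
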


\begin{proof}
Let $\mathfrak m=(u_1,\ldots,u_m,v)D_m$.  Since $D_m$ is Noetherian and $E_m$ is finitely generated, $R_m$ is Noetherian.  It is local with maximal ideal $\mathfrak n=\mathfrak m\ltimes E_m$.  The nonzero element $e=(0,1)$ is annihilated by $\mathfrak n$, so $\depth R_m=0$.  Formula \eqref{eq:fPD-depth} gives $\fPD(R_m)=0$.

Let $\mathfrak q=(u_1,\ldots,u_m)D_m$ and $\mathfrak p=\mathfrak q\ltimes E_m$.  Then $R_m/\mathfrak p\cong k[v]_{(v)}$, and its quotient field is obtained by inverting $v$.  Hence $\mathfrak p\in\GSpec(R_m)$.  Since $(v,0)\notin\mathfrak p$ and $vE_m=0$, localization kills the square-zero summand and gives $(R_m)_{\mathfrak p}\cong(D_m)_{\mathfrak q}$.  The latter is regular local of depth $m$.

It remains to bound the depth at every other prime.  Each prime of $R_m$ has the form $\mathfrak s\ltimes E_m$ with $\mathfrak s\in\Spec(D_m)$.  The depth at $\mathfrak n$ is zero.  If $\mathfrak s\neq\mathfrak m$, choose $a\in\mathfrak m\setminus\mathfrak s$.  The element $a$ annihilates $E_m$ and becomes a unit after localization, so
\[
 (R_m)_{\mathfrak s\ltimes E_m}\cong(D_m)_{\mathfrak s}.
\]
Because $D_m$ is regular local of dimension $m+1$ and $\mathfrak s$ is not maximal, $\depth(D_m)_{\mathfrak s}=\htp(\mathfrak s)\leq m$.  Thus the supremum in \eqref{eq:Goldman-depth} is exactly $m$, and $\fPD(R_m[X])=m+1$.
\end{proof}

\begin{corollary}\label{cor:all-polynomial-values}
For every integer $N\geq1$, there is a Noetherian local ring $R$ such that $\fPD(R)=0$ and $\fPD(R[X])=N$.
\end{corollary}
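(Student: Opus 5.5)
The plan is to deduce the corollary directly from Theorem~\ref{thm:Noetherian-jumps}, treating the case $N=1$ separately because that theorem requires $m\geq1$ and therefore only produces jumps of size at least two.

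\emph{Case $N\geq2$.} Put $m=N-1\geq1$ and take $R=R_m=D_m\ltimes E_m$ as in Theorem~\ref{thm:Noetherian-jumps}. That theorem already states that $R_m$ is Noetherian local, that $\fPD(R_m)=0$, and that $\fPD(R_m[X])=m+1=N$. Nothing further needs checking.

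\emph{Case $N=1$.} We need a Noetherian local ring $R$ with $\fPD(R)=0$ and $\fPD(R[X])=1$. The simplest choice is $R=k$, a field. It is Noetherian and local, and its only maximal ideal is $0$, so $\depth R=0$. By \eqref{eq:fPD-depth}, $\fPD(R)=0$. A field is Jacobson, so Theorem~\ref{thm:polynomial-Jacobson} gives $\fPD(k[X])=\fPD(k)+1=1$. Alternatively, Theorem~\ref{thm:Goldman-depth} applies with $\GSpec(k)=\{0\}$, yielding $1+\depth k=1$.

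If one prefers an example in the spirit of the idealization construction, the analogous ring $R_0=k[v]_{(v)}\ltimes k$ also works. Its maximal ideal has depth zero. Its only other prime lies over $(0)$, and the localization there is a field, so every Goldman prime has depth zero and Theorem~\ref{thm:Goldman-depth} again gives $1$. The field example is enough, though.

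There is no real obstacle here. The only point needing care is to notice that $N=1$ falls outside the range $m\geq1$ of Theorem~\ref{thm:Noetherian-jumps} and must be handled separately, which the trivial field example does.
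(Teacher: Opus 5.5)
Your proposal is correct and matches the paper's proof: a field handles $N=1$, and $R_{N-1}$ from Theorem~\ref{thm:Noetherian-jumps} handles $N\geq2$. Your optional example $k[v]_{(v)}\ltimes k$ for $N=1$ is also valid, since both of its primes are Goldman primes with depth-zero localizations.
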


\begin{proof}
Take a field when $N=1$, and take $R_{N-1}$ from Theorem~\ref{thm:Noetherian-jumps} when $N\geq2$.
\end{proof}

\begin{remark}\label{rem:scope-of-plus-one}
Theorem~\ref{thm:Noetherian-jumps} shows that neither Noetherianity nor the finiteness of $\fPD(R)$ is sufficient for the equality $\fPD(R[X])=\fPD(R)+1$.  In particular, the Noetherian assertion in \cite[Theorem~4.4]{ZhangII} and the finite-$\fPD$ assertion in \cite[Theorem~3.4]{XiongEtAl} require additional hypotheses.  The Goldman-depth formula identifies the missing geometric datum.  For the latter argument, restriction of scalars also presents a basic obstruction: an $R[X]$-module in $\FPR(R[X])$ need not belong to $\FPR(R)$ as an $R$-module.
\end{remark}

\subsection{Adjacent primes do not impose an adjacent grade bound}

The preceding examples also isolate a local obstruction to a height-theoretic proof of the one-step formula.  Koszul grade can rise by two even when prime height rises by one.

\begin{proposition}\label{prop:adjacent-grade}
There is a Noetherian ring $S$ with prime ideals $\mathfrak P\subsetneq\mathfrak Q$ such that $\htp(\mathfrak Q)=\htp(\mathfrak P)+1$, but
\[
 \Kgr_S(\mathfrak Q,S)>\Kgr_S(\mathfrak P,S)+1.
\]
More precisely, one may arrange $\Kgr_S(\mathfrak P,S)=0$ and $\Kgr_S(\mathfrak Q,S)=2$.
\end{proposition}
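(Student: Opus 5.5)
The plan is to build $S$ directly as a finite idealization over a two-dimensional polynomial ring. The idea is to let $\mathfrak P$ lie inside an embedded associated prime sitting at one closed point, and to let $\mathfrak Q$ be a maximal ideal at a different closed point, where the square-zero part disappears after localization. Concretely, take
\[
 D=k[u,v],\qquad E=D/(u,v),\qquad S=D\ltimes E,
\]
and put
\[
 \mathfrak P=(u)D\ltimes E,\qquad \mathfrak Q=(u,v-1)D\ltimes E .
\]
Since $D$ is Noetherian and $E$ is finitely generated, $S$ is Noetherian. Because $\Spec(S)\to\Spec(D)$ is an order-preserving homeomorphism, $\mathfrak P\subsetneq\mathfrak Q$ are primes with $\htp(\mathfrak P)=1$ and $\htp(\mathfrak Q)=2$. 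This gives the adjacency requirement.

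For $\mathfrak P$, note that $\mathfrak P$ is finitely generated, so Proposition~\ref{prop:grade-properties}(v) gives $\Kgr_S(\mathfrak P,S)=\Egr_S(\mathfrak P,S)$. It then suffices to show $\Hom_S(S/\mathfrak P,S)\neq0$. The element $e=(0,1)$ is nonzero and is annihilated by $\mathfrak P$. Indeed, $(a,x)(0,1)=(0,a\cdot 1)$, and $a\in(u)\subseteq(u,v)=\Ann_D E$. Therefore $1\mapsto e$ defines a nonzero map $S/\mathfrak P\to S$, and $\Kgr_S(\mathfrak P,S)=0$.

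For $\mathfrak Q$, which is maximal, Theorem~\ref{thm:fPD-grade} identifies the Koszul grade of a maximal ideal in the Noetherian case with $\depth S_{\mathfrak Q}$. I will compute this localization exactly as in the proof of Theorem~\ref{thm:Noetherian-jumps}. The element $v\in(u,v)$ kills $E$, and $v\notin(u,v-1)$, so $(v,0)$ becomes a unit in $S_{\mathfrak Q}$ while annihilating $0\ltimes E$. Hence $S_{\mathfrak Q}\cong D_{(u,v-1)}$, which is a two-dimensional regular local ring, so $\Kgr_S(\mathfrak Q,S)=2$.

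The only delicate point is conceptual rather than computational. In a Noetherian ring grade never exceeds height, so one cannot take $\mathfrak P$ minimal. Moreover, $\mathfrak Q$ must avoid every associated prime containing $\mathfrak P$, since otherwise the grade of $\mathfrak Q$ collapses. In the local rings $R_m$ this fails, because the unique maximal ideal is itself associated; that is why I move to a non-local $D$. I expect the remaining verifications to be routine: the prime correspondence and heights, the annihilator computation, and the localization isomorphism.
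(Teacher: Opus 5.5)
Your argument is correct, and it takes a different route from the paper.

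\textbf{The paper's route.} The paper works inside a polynomial ring. It takes the local idealization $R=k[u,v]_{(u,v)}\ltimes k$ and sets $S=R[X]$, $\mathfrak P=\mathfrak pS$ with $\mathfrak p=(u)D\ltimes k$, and $\mathfrak Q=\mathfrak pS+(vX-1)S$. Here $\mathfrak Q$ is a maximal ideal lying over the nonmaximal Goldman prime $\mathfrak p$. The paper gets $\Kgr_S(\mathfrak Q,S)\geq 2$ from the explicit regular sequence $vX-1,u$, using coefficient comparison and $S/(vX-1)S\cong D_v$. It then bounds the grade above by $\htp(\mathfrak Q)=2$, computed with Krull's height theorem.

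\textbf{Your route.} You avoid the polynomial extension entirely. You take the non-local base $D=k[u,v]$ and place $\mathfrak Q$ at the closed point $(u,v-1)$, away from the embedded associated prime $(u,v)\ltimes E$. You then read off the grade of the maximal ideal $\mathfrak Q$ as $\depth S_{\mathfrak Q}$. This is the standard formula expressing grade as a minimum of local depths, which the paper uses in the proof of Theorem~\ref{thm:fPD-grade}. Since $(v,0)\notin\mathfrak Q$ kills $0\ltimes E$, localization leaves the two-dimensional regular local ring $D_{(u,v-1)}$. Your remaining checks are all sound: the heights via the order-isomorphism $\Spec(S)\cong\Spec(D)$, and the nonzero element $(0,1)$ annihilated by $\mathfrak P$.

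\textbf{Comparison.} Your version is shorter and more transparent, since it needs no regular-sequence verification and no height-theorem argument. The paper's version buys context. Its $\mathfrak P$ is extended from a Noetherian local base, and its $\mathfrak Q$ is a maximal ideal of $R[X]$ contracting to $\mathfrak p$. That is exactly the configuration a height-theoretic proof of $\fPD(R[X])=\fPD(R)+1$ would exploit: compare a maximal ideal of $R[X]$ with the extension of its contraction. The paper thus shows the obstruction already occurs in that polynomial setting over a local ring. For the proposition as stated, both arguments are complete.
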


\begin{proof}
Let $D=k[u,v]_{(u,v)}$, let $R=D\ltimes k$, and put $S=R[X]$.  Set
\[
 \mathfrak p=(u)D\ltimes k,
 \qquad
 \mathfrak P=\mathfrak pS,
 \qquad
 \mathfrak Q=\mathfrak pS+(vX-1)S.
\]
The quotient $S/\mathfrak Q$ is $k[v]_{(v)}[X]/(vX-1)\cong k(v)$, so $\mathfrak Q$ is maximal.

Let $e=(0,1)\in R$.  Every element of $\mathfrak p$ annihilates $e$, and hence $0\neq e\in\Ann_S(\mathfrak P)$.  Thus $\Kgr_S(\mathfrak P,S)=0$.  Since $ve=0$, one has $e=-e(vX-1)$, and therefore $\mathfrak Q=(u,vX-1)S$.

Multiplication by $vX-1$ on $S$ is injective.  Indeed, coefficient comparison in $(vX-1)\sum_{i=0}^t a_iX^i=0$ first gives $a_0=0$ and then successively $a_1=\cdots=a_t=0$.  Moreover, $S/(vX-1)S\cong R_v\cong D_v$, because localization at $v$ kills the square-zero summand.  The image of $u$ is regular in the domain $D_v$.  Hence $vX-1,u$ is an $S$-regular sequence, so $\Kgr_S(\mathfrak Q,S)\geq2$.

The nilpotent ideal $(0\ltimes k)S$ is contained in every prime, and quotienting by it identifies $\Spec(S)$ with $\Spec(D[X])$.  Under this identification, $\mathfrak P$ corresponds to $(u)D[X]$, so $\htp(\mathfrak P)=1$.  Since $\mathfrak Q$ is generated by two elements, Krull's height theorem gives $\htp(\mathfrak Q)\leq2$, while the strict chain from the unique minimal prime through $\mathfrak P$ to $\mathfrak Q$ gives the reverse inequality.  Thus $\htp(\mathfrak Q)=2$.  Grade is at most height in a Noetherian ring, and hence $\Kgr_S(\mathfrak Q,S)=2$; see \cite[Chapter~1]{BrunsHerzog} or \cite[Chapter~13]{Matsumura}.
\end{proof}

\begin{remark}\label{rem:adjacent-warning}
Proposition~\ref{prop:adjacent-grade} disproves the inequality $\Kgr_A(\mathfrak q,A)\leq\Kgr_A(\mathfrak p,A)+1$ for adjacent primes $\mathfrak p\subsetneq\mathfrak q$.  A regular sequence chosen inside $\mathfrak p$ controls the quotient by that sequence, not the quotient by the whole prime $\mathfrak p$; the two operations cannot be interchanged.
\end{remark}

\section{Higher cyclic presentations and large idealizations}\label{sec:cyclic}

This section shows that the higher cyclic presentation tests can be homologically empty even when the ring has large small finitistic dimension.  The construction first eliminates all nonzero proper finitely presented ideals and then uses the same square-zero module to retain a prescribed Koszul grade.

\subsection{Eliminating nontrivial finitely presented ideals}

Let $(A,\mathfrak m)$ be a local Noetherian UFD that is not a field, and write
\[
 \SpecOne(A)=\{\mathfrak p\in\Spec(A)\mid\htp(\mathfrak p)=1\}.
\]
Define
\begin{equation}\label{eq:large-idealization}
 M_A=\bigoplus_{\mathfrak p\in\SpecOne(A)}(A/\mathfrak p)^{(\mathbb N)},
 \qquad
 T_A=A\ltimes M_A.
\end{equation}
Every element of $M_A$ has finite support.  The countably many copies of each height-one quotient will allow a new coordinate to be chosen outside any finite family of proposed relations.

The first lemma selects a height-one quotient on which a nonzero proper ideal acquires a nontrivial first Tor class.

\begin{lemma}\label{lem:height-one-Tor}
Let $A$ be a Noetherian UFD and let $0\neq J\subsetneq A$ be an ideal.  Then $\Tor_1^A(A/J,A/\mathfrak p)\neq0$ for some $\mathfrak p\in\SpecOne(A)$.
\end{lemma}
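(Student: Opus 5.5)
In a UFD every height-one prime is principal, say $\mathfrak p=(\pi)$ with $\pi$ a prime element. The plan is to reduce the Tor group to an annihilator computation and then pick $\pi$ from a suitable element of $J$.

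First, multiplication by $\pi$ is injective on $A$, because $A$ is a domain. So $0\to A\xrightarrow{\pi}A\to A/\mathfrak p\to0$ is a free resolution of $A/\mathfrak p$. Tensoring with $A/J$ gives
\[
 \Tor_1^A(A/J,A/\mathfrak p)\cong\{\bar a\in A/J\mid \pi a\in J\}=(J:\pi)/J.
\]
The lemma therefore reduces to finding a prime element $\pi$ with $(J:\pi)\neq J$.

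Next I would choose $\pi$. Since $J\neq0$, pick $0\neq x\in J$. Since $J$ is proper, $x$ is not a unit. As $A$ is a UFD, write $x=u\pi_1\cdots\pi_r$ with $u$ a unit, the $\pi_i$ prime, and $r\geq1$. Consider the chain
\[
 x_0=x,\quad x_1=x/\pi_1,\quad x_2=x/(\pi_1\pi_2),\quad\ldots,\quad x_r=u.
\]
Here $x_0\in J$, while $x_r=u\notin J$ because $J$ is proper. So there is a least index $i\geq1$ with $x_i\notin J$. Then $x_{i-1}=\pi_i x_i\in J$, but $x_i\notin J$. Hence $x_i\in(J:\pi_i)\setminus J$, and the element $\bar x_i$ is a nonzero class in $\Tor_1^A(A/J,A/(\pi_i))$.

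Finally, $(\pi_i)$ is a height-one prime. It is prime because $\pi_i$ is a prime element. It has height one by Krull's principal ideal theorem, and it is nonzero since $\pi_i\neq0$. Noetherianity is used only here. So $\mathfrak p=(\pi_i)\in\SpecOne(A)$, which proves the lemma.

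There is no real obstacle. The one point needing care is the descending-factor argument, which must land on an index where membership in $J$ switches. This uses exactly the two hypotheses $J\neq0$ and $J\neq A$.
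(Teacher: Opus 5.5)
Your proof is correct and follows the paper's idea: find a prime element $\pi$ and some $b\notin J$ with $\pi b\in J$. The paper obtains this by taking $a\in J$ with the fewest irreducible factors, where you descend along a factorization, and it resolves $A/J$ to get $\Tor_1^A(A/J,A/\mathfrak p)\cong(J\cap\mathfrak p)/J\mathfrak p$ where you resolve $A/\mathfrak p$ to get $(J:\pi)/J$; multiplication by $\pi$ identifies these two descriptions, so the differences are only cosmetic.
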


\begin{proof}
Choose $0\neq a\in J$ for which the number of irreducible factors, counted with multiplicity, is minimal.  Since $J$ is proper, $a$ is not a unit.  Write $a=\pi b$ with $\pi$ irreducible.  Minimality gives $b\notin J$.  The ideal $\mathfrak p=(\pi)$ is prime because $A$ is a UFD, and it has height one by Krull's principal ideal theorem \cite[Chapter~11]{AtiyahMacdonald}.

Tensoring $0\to J\to A\to A/J\to0$ with $A/\mathfrak p$ gives the standard identification
\[
 \Tor_1^A(A/J,A/\mathfrak p)
 \cong (J\cap\mathfrak p)/(J\mathfrak p);
\]
see, for example, \cite[Chapter~7]{Rotman}.  The class of $a$ is nonzero: if $a\in J\mathfrak p=\pi J$, cancellation would imply $b\in J$.
\end{proof}

The support-separation argument now prevents finite presentation of every nonzero proper ideal.

\begin{theorem}\label{thm:no-finitely-presented-ideals}
Let $(A,\mathfrak m)$ be a local Noetherian UFD that is not a field, and let $T_A$ be defined by \eqref{eq:large-idealization}.  Then $T_A$ has no nonzero proper finitely presented ideals.
\end{theorem}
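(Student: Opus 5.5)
We argue by contradiction and use support separation. Suppose $I$ is a nonzero proper finitely presented ideal of $T=T_A$, and fix generators $x_j=(a_j,e_j)$, $1\le j\le r$. Let $K\subseteq T^r$ be the kernel of $T^r\to I$, $(t_j)\mapsto\sum t_jx_j$. Since $I$ is finitely presented, $K$ is finitely generated; we fix generators $\rho^{(1)},\dots,\rho^{(s)}$. Each generator has only finitely many nonzero $M$-coordinates. Hence there is a height-one prime $\mathfrak p$, to be chosen below, and a copy $c$ of $A/\mathfrak p$ in $M_A$ that none of the $\rho^{(l)}$ touches. Let $\pi_c:M_A\to A/\mathfrak p$ denote the projection onto this copy.

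The key computation concerns the $c$-component of an element of $K$ generated by the $\rho^{(l)}$. Write $\rho^{(l)}=(\beta^{(l)},\phi^{(l)})$ with $\beta^{(l)}\in A^r$ and $\phi^{(l)}\in M^r$. For $(b,g)\in T$,
\[
 (b,g)\rho^{(l)}=(b\beta^{(l)},\,b\phi^{(l)}+g\beta^{(l)}),
\]
and $\pi_c(b\phi^{(l)})=0$. Therefore the $c$-component of every element of $K$ lies in the image of $L\otimes_A A/\mathfrak p\to(A/\mathfrak p)^r$, where $L\subseteq A^r$ is the $A$-module generated by the first coordinates $\beta^{(l)}$. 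These $\beta^{(l)}$ are exactly the first coordinates of elements of $K$, so $L$ is the set of first coordinates of $K$. It then suffices to exhibit an element $(0,f)\in K$ whose $c$-component lies outside this image. Elements $(0,f)$ with $f$ supported on $c$ lie in $K$ exactly when $\sum_j a_j\pi_c(f_j)=0$ in $A/\mathfrak p$. We split into two cases according to the ideal $J=(a_1,\dots,a_r)\subseteq A$, the image of $I$ under $T\to A$.

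\emph{Case $J\neq0$.} Since $T$ is local and $I$ is proper, $J$ is proper. Choose $\mathfrak p$ by Lemma~\ref{lem:height-one-Tor} with $\Tor_1^A(A/J,A/\mathfrak p)\cong(J\cap\mathfrak p)/J\mathfrak p\neq0$. In this case $L$ is the full syzygy module of $(a_1,\dots,a_r)$ in $A$. Tensoring $0\to L\to A^r\to J\to0$ with $A/\mathfrak p$ shows that $\operatorname{coker}\bigl(L\otimes A/\mathfrak p\to(A/\mathfrak p)^r\bigr)\cong J/\mathfrak pJ$. The map $(A/\mathfrak p)^r\to A/\mathfrak p$ given by the $a_j$ has image $(J+\mathfrak p)/\mathfrak p\cong J/(J\cap\mathfrak p)$. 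Consequently, the kernel of $(A/\mathfrak p)^r\to A/\mathfrak p$ modulo the image of $L\otimes A/\mathfrak p$ is
\[
 \ker\bigl(J/\mathfrak pJ\to J/(J\cap\mathfrak p)\bigr)=(J\cap\mathfrak p)/\mathfrak pJ\neq0.
\]
A vector realizing a nonzero class, placed in coordinate $c$, gives the required $(0,f)\in K$. This contradicts the finite generation of $K$.

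\emph{Case $J=0$.} Here $I=0\ltimes N$ for a nonzero finitely generated $A$-submodule $N\subseteq M_A$, since $(b,g)(0,n)=(0,bn)$. We choose the generators $e_j$ to be a minimal generating set of the $A$-module $N$. Then $L=\{b\in A^r\mid\sum b_je_j=0\}$ lies in $\mathfrak mA^r$, because a relation with a unit coefficient would make some generator redundant. Take any height-one $\mathfrak p$ and fresh copy $c$. Since $a_j=0$, the element $(0,f)$ with $f=(\bar1,0,\dots,0)$ in coordinate $c$ lies in $K$. Its $c$-component is not in $\mathfrak m(A/\mathfrak p)^r$, because $A/\mathfrak p\neq0$ and Nakayama applies. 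This is again a contradiction.

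The main obstacle is the bookkeeping in the first case. One must check carefully that the only contributions to the fresh coordinate come from $g\beta^{(l)}$, and that $L$ is precisely the first-coordinate syzygy module, so that the Tor identification of Lemma~\ref{lem:height-one-Tor} gives exactly the obstruction needed.
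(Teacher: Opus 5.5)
Your strategy is the paper's: a fresh copy $c$, the observation that $c$-components of the span of the $\rho^{(l)}$ come only from $g\beta^{(l)}$, a Tor obstruction when $J\neq0$, and Nakayama when $J=0$. However, one assertion in Case $J\neq0$ is false. There $L$, the set of first coordinates of elements of $K$, need \emph{not} equal the full syzygy module $S=\ker\bigl(A^r\xrightarrow{(a_1,\dots,a_r)}A\bigr)$.

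A pair $(b,v)\in A^r\oplus M_A^r$ lies in $K$ if and only if $\sum_j b_ja_j=0$ and $\sum_j b_je_j+\sum_j a_jv_j=0$. Hence $L=\{b\in S\mid \sum_j b_je_j\in JM_A\}$, which can be strictly smaller than $S$. For a concrete case, let $A$ be a DVR with uniformizer $\pi$. Then $M_A$ is a direct sum of copies of $A/(\pi)$, so $\pi M_A=0$. Take $x_1=(\pi,0)$ and $x_2=(\pi,e)$ with $0\neq e\in M_A$. The vector $(1,-1)$ is a syzygy of $(\pi,\pi)$, but it is not in $L$, since any lift would force $e=\pi(v_1+v_2)=0$. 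So the sequence $0\to L\to A^r\to J\to0$ that you tensor is not exact in general. Your identification of $\operatorname{coker}\bigl(L\otimes_A A/\mathfrak p\to(A/\mathfrak p)^r\bigr)$ with $J/\mathfrak pJ$ is therefore unjustified as written.

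The repair is immediate, and it is exactly how the paper argues: only the inclusion $L\subseteq S$ is needed, and that inclusion is obvious. Run your Tor computation with $S$ in place of $L$, where $0\to S\to A^r\to J\to0$ really is exact. With $C=A/\mathfrak p$, this gives $g\in\ker(C^r\to C)$ whose class in $\ker(C^r\to C)/\operatorname{Im}(S\otimes_A C)\cong(J\cap\mathfrak p)/\mathfrak pJ$ is nonzero. Since $\operatorname{Im}(L\otimes_A C)\subseteq\operatorname{Im}(S\otimes_A C)$, this $g$ also lies outside $\operatorname{Im}(L\otimes_A C)$, and $(0,g)$ placed in the fresh copy gives the contradiction. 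Accordingly, your closing remark should say that the Tor obstruction needs the \emph{inclusion} of first coordinates into the syzygy module, not equality; equality is false in general.

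Case $J=0$ is correct as written. There every syzygy $b$ of the $e_j$ does lift, because $(b,0)\in K$. You should add that a height-one prime exists, since $A$ is not a field: take an irreducible factor of a nonzero nonunit.
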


\begin{proof}
Let $0\neq L\subsetneq T_A$ be finitely generated.  Choose generators $z_i=(a_i,u_i)$ for $1\leq i\leq s$, and put $J=(a_1,\ldots,a_s)\subseteq A$.  The ideal $J$ is proper: if $J=A$, an $A$-linear combination of the $z_i$ has first coordinate $1$ and is a unit of $T_A$.

Let $\Phi:T_A^s\to L$ be the canonical surjection with $\Phi(e_i)=z_i$, and write $K=\ker\Phi$.  We prove that $K$ is not finitely generated.

\smallskip
\noindent\emph{Case 1: $J\neq0$.}
Let $\psi:A^s\to J$ be defined by $\psi(e_i)=a_i$, and put $S=\ker\psi$.  By Lemma~\ref{lem:height-one-Tor}, choose $\mathfrak p\in\SpecOne(A)$ such that, for $C=A/\mathfrak p$, one has $\Tor_1^A(A/J,C)\neq0$.  Tensoring the exact sequences $0\to J\to A\to A/J\to0$ and $0\to S\to A^s\to J\to0$ with $C$ gives
\begin{equation}\label{eq:Tor-obstruction}
 \frac{\ker(C^s\xrightarrow{(a_1,\ldots,a_s)}C)}
 {\operatorname{Im}(S\otimes_A C\to C^s)}
 \cong\Tor_1^A(A/J,C)\neq0.
\end{equation}
Choose $g=(g_1,\ldots,g_s)$ in the numerator of \eqref{eq:Tor-obstruction} but outside the denominator.

Suppose that $K$ is generated by finitely many vectors $w_j=(b^{(j)},v^{(j)})\in A^s\oplus M_A^s$.  Comparing first coordinates in $\Phi(w_j)=0$ gives $b^{(j)}\in S$.  Each $v^{(j)}$ has finite support, so one may choose a copy $C_0\cong C$ among the countably many $C$-summands of $M_A$ that occurs in none of these supports.  Regard $g$ as a vector in $M_A^s$ supported on $C_0^s$.  Since $\sum_i a_i g_i=0$ in $C_0$, the vector $(0,g)$ belongs to $K$.

For coefficients $(\alpha_j,h_j)\in T_A$, the $C_0$-component of the square-zero part of $\sum_j(\alpha_j,h_j)w_j$ is $\sum_j b^{(j)}(h_j)_{C_0}$.  It belongs to $\operatorname{Im}(S\otimes_A C_0\to C_0^s)$, whereas the $C_0$-component of $(0,g)$ is the chosen vector $g$ outside that image.  This contradiction proves that $K$ is not finitely generated.

\smallskip
\noindent\emph{Case 2: $J=0$.}
Then $L=0\ltimes N$ for a nonzero finitely generated $A$-submodule $N$ of $M_A$.  Choose a minimal generating set $u_1,\ldots,u_s$ of $N$ and put $S=\ker(A^s\to N)$.  Since $A$ is local, Nakayama's lemma gives $S\subseteq\mathfrak mA^s$; see \cite[Chapter~2]{AtiyahMacdonald}.  The map $\Phi$ satisfies $\Phi(b,v)=(0,\sum_i b_i u_i)$, so $(b,v)\in K$ implies $b\in S$, while every $(0,v)$ belongs to $K$.

A height-one prime of $A$ exists: choose a nonzero nonunit and an irreducible factor.  If $K$ were generated by finitely many $w_j=(b^{(j)},v^{(j)})$, choose $\mathfrak p\in\SpecOne(A)$ and a copy $C_0\cong A/\mathfrak p$ outside all supports of the $v^{(j)}$.  Every $C_0$-component generated by the $w_j$ lies in $(\mathfrak mC_0)^s$, because each $b^{(j)}$ lies in $S\subseteq\mathfrak mA^s$.  On the other hand, $K$ contains a square-zero vector supported on $C_0^s$ whose first coordinate is $1$ and whose other coordinates are zero.  This is impossible because $1\notin\mathfrak mC_0$.

Thus $K$ is not finitely generated in either case.  If $L$ were finitely presented, the kernel of every surjection from a finite free module onto $L$ would be finitely generated, by Schanuel's lemma.  The chosen surjection $\Phi$ contradicts this, so $L$ is not finitely presented.
\end{proof}

The absence of finitely presented proper ideals makes every higher-presented cyclic module trivial.

\begin{corollary}\label{cor:higher-cyclic-projective}
For every $n\geq2$, each $n$-presented cyclic $T_A$-module is projective.  Hence $T_A$ is a cyclic weak $(n,0)$-ring for every $n\geq2$.
\end{corollary}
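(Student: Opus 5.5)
Let $E$ be an $n$-presented cyclic $T_A$-module with $n\geq2$. Choosing a generator, we have $E\cong T_A/I$ for some ideal $I$ of $T_A$. The plan is to use the fact that the cyclic presentation forces finite presentation of $I$. Theorem~\ref{thm:no-finitely-presented-ideals} then leaves only the trivial ideals.

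The key steps run as follows. First, one point needs care: Definition~\ref{def:n-presented} allows an arbitrary finite free $F_0$, not necessarily $T_A$ itself. Lemma~\ref{lem:cyclic-ideal} handles this, since its Schanuel comparison of $0\to I\to T_A\to T_A/I\to0$ with $0\to K\to F_0\to T_A/I\to0$ shows that $I$ is finitely presented whenever $T_A/I$ is $n$-presented with $n\geq2$. Second, Theorem~\ref{thm:no-finitely-presented-ideals} says $T_A$ has no nonzero proper finitely presented ideals, so $I=0$ or $I=T_A$. Third, if $I=0$ then $E\cong T_A$ is free, and if $I=T_A$ then $E=0$. In either case $E$ is projective, so $\pd_{T_A}E=0$.

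The second assertion is then immediate from Definition~\ref{def:cyclic-weak} with $d=0$: every $n$-presented cyclic $T_A$-module has projective dimension $0\leq 0$, so $T_A$ is a cyclic weak $(n,0)$-ring. There is no genuine obstacle here, since all the substantive work sits in Theorem~\ref{thm:no-finitely-presented-ideals}. The only thing to verify is the reduction to $n=2$ inside Lemma~\ref{lem:cyclic-ideal}. An $n$-presented module is $2$-presented by truncating the resolution, so the lemma applies for every $n\geq2$.
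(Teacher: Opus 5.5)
Your proposal is correct and follows essentially the same route as the paper's proof. You write $E\cong T_A/I$, use Lemma~\ref{lem:cyclic-ideal} to make $I$ finitely presented, and then Theorem~\ref{thm:no-finitely-presented-ideals} forces $I=0$ or $I=T_A$, so $E$ is $T_A$ or $0$; your extra remarks on arbitrary $F_0$ and on truncating to $n=2$ are accurate but only make explicit what the lemma already handles.
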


\begin{proof}
Write an $n$-presented cyclic module as $T_A/L$.  Lemma~\ref{lem:cyclic-ideal} makes $L$ finitely presented, and Theorem~\ref{thm:no-finitely-presented-ideals} forces $L=0$ or $L=T_A$.  Thus the quotient is $T_A$ or $0$.
\end{proof}

\begin{remark}\label{rem:fresh-coordinate}
The countable multiplicity in \eqref{eq:large-idealization} is essential to this proof.  After finitely many proposed relation generators have been fixed, one must choose a copy of the relevant quotient outside all of their supports.  A single copy of each $A/\mathfrak p$ would not provide that fresh coordinate.
\end{remark}

\subsection{Retaining a prescribed Koszul grade}

We now specialize to a regular local base.  Every height-one quotient then has depth one less than the base, and the Koszul complex splits across the idealization.

Fix $r\geq2$ and a field $k$.  Put
\begin{equation}\label{eq:regular-large-idealization}
 A_r=k[x_1,\ldots,x_r]_{\mathfrak m},
 \quad \mathfrak m=(x_1,\ldots,x_r),
 \quad
 M_r=\bigoplus_{\mathfrak p\in\SpecOne(A_r)}(A_r/\mathfrak p)^{(\mathbb N)},
 \quad
 T_r=A_r\ltimes M_r.
\end{equation}
The ring $A_r$ is a regular local UFD of dimension $r$, while $T_r$ is local with maximal ideal $\mathfrak n=\mathfrak m\ltimes M_r$.  Moreover, $\dim T_r=r$ because $\Spec(T_r)\cong\Spec(A_r)$.

For $1\leq i\leq r$, set $X_i=(x_i,0)$ and let $I_r=(X_1,\ldots,X_r)T_r$.

The height-one summands determine the first nonzero Koszul cohomology of $I_r$.

\begin{proposition}\label{prop:grade-Ir}
One has $\Kgr_{T_r}(I_r,T_r)=r-1$.  Equivalently,
\[
 \Ext_{T_r}^i(T_r/I_r,T_r)=0\quad(0\leq i<r-1),
 \qquad
 \Ext_{T_r}^{r-1}(T_r/I_r,T_r)\neq0.
\]
\end{proposition}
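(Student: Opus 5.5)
The plan is to compute the Koszul cohomology of $\mathbf X=X_1,\ldots,X_r$ on $T_r$ directly, using the fact that the Koszul complex splits across the idealization. As an $A_r$-module, $T_r=A_r\oplus M_r$, and each $X_i$ acts on $T_r$ as $x_i$ acting diagonally on $A_r$ and on $M_r$. Proposition~\ref{prop:grade-properties}(ii) applies to the map $A_r\to T_r$, $a\mapsto(a,0)$, under which $I_r=\mathfrak mT_r$. Viewing $T_r$ as an $A_r$-module, it gives
\[
 \Kgr_{T_r}(I_r,T_r)=\Kgr_{A_r}(\mathfrak m,T_r).
\]
Since Koszul cohomology commutes with direct sums, this equals
\[
 \min\Bigl\{\Kgr_{A_r}(\mathfrak m,A_r),\ \inf_{\mathfrak p\in\SpecOne(A_r)}\Kgr_{A_r}(\mathfrak m,A_r/\mathfrak p)\Bigr\}.
\]
The countable multiplicities in $M_r$ do not matter here, because Koszul cohomology of a direct sum is the direct sum of the cohomologies.

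Next I evaluate each term using Noetherian depth theory over the regular local ring $A_r$, where Koszul grade on a finitely generated module equals depth \cite[Section~1.2]{BrunsHerzog}. First, $\depth A_r=r$. Second, for a height-one prime $\mathfrak p=(\pi)$, where $A_r$ is a UFD, the ring $A_r/\mathfrak p=A_r/\pi A_r$ has depth $r-1$, because $\pi$ is $A_r$-regular and $A_r$ is Cohen--Macaulay. Hence each summand $A_r/\mathfrak p$ has Koszul grade exactly $r-1$, and the minimum over all terms is $r-1$. Since $r\geq2$, there is at least one height-one prime, so the infimum is attained and is not vacuous.

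Finally, Proposition~\ref{prop:grade-properties}(v) applies because $I_r$ is finitely generated, and converts the Koszul grade into Ext grade. This gives $\Ext^i_{T_r}(T_r/I_r,T_r)=0$ for $i<r-1$ and $\Ext^{r-1}_{T_r}(T_r/I_r,T_r)\neq0$.

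The main point needing care is the change-of-rings step. The Koszul complex $K^\bullet(\mathbf X;T_r)$ over $T_r$ coincides, as a complex of abelian groups, with $K^\bullet(\mathbf x;T_r)$ over $A_r$. This holds because $K_\bullet(\mathbf X;T_r)=K_\bullet(\mathbf x;A_r)\otimes_{A_r}T_r$, so $\Hom_{T_r}(K_\bullet(\mathbf X;T_r),T_r)=\Hom_{A_r}(K_\bullet(\mathbf x;A_r),T_r)$. I would state this identification explicitly, either instead of citing (ii) or alongside it, and then take the direct-sum decomposition termwise. The nonvanishing of $H^{r-1}$ then comes from any single copy of $A_r/\mathfrak p$.
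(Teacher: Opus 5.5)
Your proposal is correct and follows the paper's proof essentially step for step. The paper also splits the Koszul cochain complex of $\mathbf X$ on $T_r$ into the $A_r$-Koszul complexes on $A_r$ and on $M_r$, and uses that $\mathbf x$ has grade $r$ on $A_r$ and grade $r-1$ on each $A_r/(\pi)$, with cohomology commuting with direct sums. It then passes to Ext grade via Proposition~\ref{prop:grade-properties}(v). Your explicit identification $\Hom_{T_r}(K_\bullet(\mathbf X;T_r),T_r)=\Hom_{A_r}(K_\bullet(\mathbf x;A_r),T_r)$ simply makes precise the splitting that the paper asserts.
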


\begin{proof}
As an $A_r$-module, $T_r=A_r\oplus M_r$, and multiplication by $X_i$ acts as multiplication by $x_i$ on both summands.  Hence the Koszul cochain complex splits as
\begin{equation}\label{eq:Koszul-splitting}
 K^\bullet_{T_r}(\mathbf X;T_r)
 \cong K^\bullet_{A_r}(\mathbf x;A_r)
 \oplus K^\bullet_{A_r}(\mathbf x;M_r),
\end{equation}
where $\mathbf x=x_1,\ldots,x_r$.

The sequence $\mathbf x$ is a regular system of parameters, so the first complex in \eqref{eq:Koszul-splitting} has no nonzero cohomology below degree $r$.  If $\mathfrak p\in\SpecOne(A_r)$, the UFD property gives $\mathfrak p=(\pi)$ for a nonzerodivisor $\pi$.  Thus $A_r/\mathfrak p$ is a Cohen--Macaulay hypersurface of dimension $r-1$, and
\[
 \Kgr_{A_r}(\mathfrak m,A_r/\mathfrak p)=r-1;
\]
see \cite{BrunsHerzog,Eisenbud}.  Since the Koszul complex is finite free, $\Hom$ from it commutes with direct sums, and cohomology commutes with these direct sums.  Therefore the first nonzero cohomology of $K^\bullet_{A_r}(\mathbf x;M_r)$ occurs in degree $r-1$.

It follows from \eqref{eq:Koszul-splitting} that $\Kgr_{T_r}(I_r,T_r)=r-1$.  Since $I_r$ is finitely generated, Proposition~\ref{prop:grade-properties}(v) gives the asserted Ext vanishing and nonvanishing.
\end{proof}

The same ideal determines the small finitistic dimension exactly, not merely from below.

\begin{theorem}\label{thm:exact-cyclic-example}
For every $r\geq2$, the local ring $T_r$ in \eqref{eq:regular-large-idealization} has the following properties.
\begin{enumerate}[label=\textup{(\roman*)}]
\item Every $n$-presented cyclic $T_r$-module is projective for every $n\geq2$.
\item The ideal $I_r$ satisfies
\[
 \Ext_{T_r}^i(T_r/I_r,T_r)=0\quad(0\leq i<r-1),
 \qquad
 \Ext_{T_r}^{r-1}(T_r/I_r,T_r)\neq0.
\]
\item One has $\fPD(T_r)=r-1$.
\end{enumerate}
\end{theorem}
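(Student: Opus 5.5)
Parts (i) and (ii) are already in hand.  Part (i) is Corollary~\ref{cor:higher-cyclic-projective} applied with $A=A_r$; this is legitimate because $A_r$ is a local Noetherian UFD that is not a field (we have $r\geq2$).  Part (ii) is Proposition~\ref{prop:grade-Ir}.  All the work therefore goes into (iii).  By \eqref{eq:fPD-grade}, and since $T_r$ is local with maximal ideal $\mathfrak n=\mathfrak m\ltimes M_r$, we have $\fPD(T_r)=\Kgr_{T_r}(\mathfrak n,T_r)$.  We will prove the two inequalities separately.

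\emph{Lower bound.}  Since $I_r\subseteq\mathfrak n$, monotonicity (Proposition~\ref{prop:grade-properties}(iv)) together with Proposition~\ref{prop:grade-Ir} gives $\Kgr_{T_r}(\mathfrak n,T_r)\geq r-1$.

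\emph{Upper bound.}  Let $I\subseteq\mathfrak n$ be a finitely generated ideal, with generators $(a_j,u_j)$ for $1\leq j\leq s$.  We must show that $H^i(K^\bullet((a_j,u_j)_j;T_r))\neq0$ for some $i\leq r-1$.  Since the Koszul grade is independent of the generating set, we may enlarge $I$ (which only increases the grade) so that it contains $I_r$.  The plan is to produce an element killed by all of $I$, or a Koszul class in low degree, using a fresh summand of $M_r$.

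Here is the argument we expect to work.  Pick one summand $C_0\cong A_r/\mathfrak p$ of $M_r$, with $\mathfrak p=(\pi)$, lying outside the finite supports of all the $u_j$.  Then $0\ltimes C_0$ is a $T_r$-submodule on which each $(a_j,u_j)$ acts as multiplication by $a_j$, because $u_j$ multiplied by anything in $0\ltimes M_r$ is zero.  We would like to split off $0\ltimes C_0$ as a $T_r$-direct summand of $T_r$, or at least compare grades.  However, $T_r\cdot(0,c)=0\ltimes A_rc$, and the complement $A_r\ltimes M'$ is not an ideal.  So instead of splitting, we argue with a short exact sequence of $T_r$-modules
\[
 0\to 0\ltimes C_0\to T_r\to T_r/(0\ltimes C_0)\to0 .
\]
The connecting maps in Koszul cohomology should be controlled by the fact that the $u_j$ do not touch $C_0$.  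Concretely, we expect the projection $T_r\to 0\ltimes C_0$ (killing the other coordinates) to be compatible with the Koszul differentials.  The reason is that $(a_j,u_j)(b,m)=(a_jb,a_jm+bu_j)$, and the $C_0$-component of $bu_j$ is zero.  Hence $K^\bullet(\,\cdot\,;C_0)$ is a direct summand of $K^\bullet(\,\cdot\,;T_r)$ as a complex of abelian groups.  Now $\Kgr$ on $C_0$ equals $\Kgr_{A_r}(J,A_r/\mathfrak p)$ with $J=(a_1,\ldots,a_s)\subseteq\mathfrak m$, and this is at most $\depth A_r/\mathfrak p=r-1$.  Thus some $H^i$ with $i\leq r-1$ is nonzero.

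Taking the supremum over all finitely generated $I\subseteq\mathfrak n$ then gives $\Kgr_{T_r}(\mathfrak n,T_r)\leq r-1$, and hence $\fPD(T_r)=r-1$.  The main obstacle is verifying that this direct-summand splitting of the Koszul cochain complex is genuinely compatible with the differentials.  We also need to check that the bound $\Kgr_{A_r}(J,A_r/\mathfrak p)\leq\depth A_r/\mathfrak p$ holds for the possibly non-$\mathfrak m$-primary ideal $J\subseteq\mathfrak m$.  This follows from monotonicity in $J$ together with $\Kgr_{A_r}(\mathfrak m,A_r/\mathfrak p)=r-1$.
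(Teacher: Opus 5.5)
Your proposal is correct, and for part (iii) it takes a different route from the paper.

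The paper does not split (iii) into two inequalities. It observes that $\mathfrak n$ is the only prime of $T_r$ containing $I_r$, since primes are $\mathfrak q\ltimes M_r$ and must contain every $x_i$. It then invokes the prime-attainment property, Proposition~\ref{prop:grade-properties}(vi), to get $\Kgr_{T_r}(\mathfrak n,T_r)=\Kgr_{T_r}(I_r,T_r)=r-1$ in one line.

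You instead get the lower bound from monotonicity and prove the upper bound directly. For an arbitrary finitely generated $I=(z_1,\ldots,z_s)\subseteq\mathfrak n$ with $z_j=(a_j,u_j)$, you choose a copy $C_0$ of some $A_r/\mathfrak p$ outside the supports of the $u_j$. This exhibits $K^\bullet_{A_r}(\mathbf a;C_0)$ as a retract of $K^\bullet(\mathbf z;T_r)$, and its cohomology is nonzero in degree $\Kgr_{A_r}(J,A_r/\mathfrak p)\leq r-1$.

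The step you flag as the main obstacle is already settled by your own computation. Both the inclusion $0\ltimes C_0\hookrightarrow T_r$ and the coordinate projection $T_r\to 0\ltimes C_0$ commute with multiplication by each $z_j$, because the $C_0$-component of $bu_j$ is zero. So they are chain maps whose composite is the identity, and you should state this outright rather than as an expectation. The detour through the short exact sequence and the enlargement of $I$ to contain $I_r$ are unnecessary.

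As for the trade-off: the paper's argument is shorter, but it rests on the non-Noetherian black box (vi), namely that Koszul grade is the infimum over primes containing the ideal, so it depends only on the radical. Your argument uses only the definition of Koszul grade and Noetherian depth facts over $A_r$. It reuses the fresh-coordinate device from Theorem~\ref{thm:no-finitely-presented-ideals}, and it shows explicitly that the bound $r-1$ is witnessed on a single height-one summand for every finitely generated subideal of $\mathfrak n$.
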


\begin{proof}
Part (i) follows from Corollary~\ref{cor:higher-cyclic-projective}, and part (ii) is Proposition~\ref{prop:grade-Ir}.

For part (iii), every prime of $T_r$ has the form $\mathfrak q\ltimes M_r$.  A prime containing $I_r$ must contain each $(x_i,0)$, so $\mathfrak q\supseteq\mathfrak m$ and hence $\mathfrak q=\mathfrak m$.  Thus the unique prime containing $I_r$ is $\mathfrak n$.  By the prime-attainment property in Proposition~\ref{prop:grade-properties}(vi),
\[
 \Kgr_{T_r}(\mathfrak n,T_r)
 =\Kgr_{T_r}(I_r,T_r)=r-1.
\]
Since $T_r$ is local, formula \eqref{eq:fPD-grade} yields $\fPD(T_r)=r-1$.
\end{proof}

Every positive value of the small finitistic dimension therefore occurs among rings whose higher cyclic tests are all projective.

\begin{corollary}\label{cor:prescribed-fPD}
For every $n\geq2$ and every $h\geq1$, there is a commutative local cyclic weak $(n,0)$-ring $T$ such that $\fPD(T)=h$.
\end{corollary}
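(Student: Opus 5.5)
The plan is to take $T=T_r$ with $r=h+1$, which is the ring of \eqref{eq:regular-large-idealization}. Since $h\geq1$, we have $r\geq2$, so Theorem~\ref{thm:exact-cyclic-example} applies to this choice. That theorem says $T_r$ is local, as noted right after \eqref{eq:regular-large-idealization}, because $A_r$ is local and idealization preserves locality.

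The first property to check is that $T$ is a cyclic weak $(n,0)$-ring. By Theorem~\ref{thm:exact-cyclic-example}(i), equivalently Corollary~\ref{cor:higher-cyclic-projective}, every $n$-presented cyclic $T_r$-module is projective for each $n\geq2$. A projective module has projective dimension $0$, so Definition~\ref{def:cyclic-weak} is satisfied with $d=0$.

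The second property is the value of the small finitistic dimension. Theorem~\ref{thm:exact-cyclic-example}(iii) gives $\fPD(T_r)=r-1=h$, which completes the argument. The ideal $I_r$ from part (ii) of that theorem also supplies the finitely generated ideal needed for Theorem~\ref{thm:intro-cyclic}.

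I expect no real obstacle here. The only thing to watch is the index shift $r=h+1$, and that $h\geq1$ is exactly what guarantees $r\geq2$, the hypothesis under which $T_r$ was built. If one also wanted $h=0$, a field would do, but that case is not part of the statement.
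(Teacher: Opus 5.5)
Your proposal is correct and is exactly the paper's argument. The paper also takes $T=T_{h+1}$ and reads off both the cyclic weak $(n,0)$ property (part (i)) and $\fPD(T)=h$ (part (iii)) from Theorem~\ref{thm:exact-cyclic-example}, with $h\geq1$ ensuring $r=h+1\geq2$.
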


\begin{proof}
Take $T=T_{h+1}$ in Theorem~\ref{thm:exact-cyclic-example}.
\end{proof}

\begin{corollary}\label{cor:negative-Mahdou-question}
For every $n\geq2$ and $d\geq0$, there is a cyclic weak $(n,d)$-ring $R$ with $\fPD(R)>d$.  Hence \cite[Open Question~3.8]{ZhangIII} has a negative answer.
\end{corollary}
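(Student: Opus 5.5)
Fix $n\geq2$ and $d\geq0$, and take $R=T_{d+2}$ from \eqref{eq:regular-large-idealization}, so that $r=d+2\geq2$. This is the ring of Corollary~\ref{cor:prescribed-fPD} with $h=d+1\geq1$. Theorem~\ref{thm:exact-cyclic-example}(i) shows that every $n$-presented cyclic $R$-module is projective. Hence $\pd_R E=0\leq d$ for each such module $E$, and $R$ is a cyclic weak $(n,d)$-ring in the sense of Definition~\ref{def:cyclic-weak}. The condition with bound $0$ implies the condition with bound $d$, so no further work is needed for this half.

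Theorem~\ref{thm:exact-cyclic-example}(iii) gives $\fPD(R)=r-1=d+1>d$. This settles the first assertion.

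If one prefers to see the failure through the Ext criterion, use Theorem~\ref{thm:Ext-detection} together with the ideal $I_{d+2}$ of Theorem~\ref{thm:exact-cyclic-example}(ii). Its Ext groups vanish in degrees $0,\ldots,d$ and are nonzero in degree $d+1$. The implication required by the criterion therefore fails, and again $\fPD(R)\not\leq d$.

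For the final sentence, I would recall that \cite[Open Question~3.8]{ZhangIII} asks whether every cyclic weak $(n,d)$-ring has $\fPD\leq d$, the analogue of the $n=1$ statement. The ring $R$ is a counterexample for every $n\geq2$ and every $d$, so the answer is negative. There is no real obstacle here. The only point to check is that the indexing $r=d+2\geq2$ meets the standing hypothesis $r\geq2$, which it does even when $d=0$.
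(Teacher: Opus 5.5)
Your proposal is correct and matches the paper's proof: take $R=T_{d+2}$, which is cyclic weak $(n,0)$ and hence cyclic weak $(n,d)$, while $\fPD(R)=d+1>d$ by Theorem~\ref{thm:exact-cyclic-example}(iii). Your additional check through Theorem~\ref{thm:Ext-detection} is consistent with this but not needed.
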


\begin{proof}
Take $R=T_{d+2}$.  It is cyclic weak $(n,0)$ and hence cyclic weak $(n,d)$, while $\fPD(R)=d+1$.
\end{proof}

\subsection{The threshold at the first presentation level}

At presentation level one, every finitely generated ideal remains visible through its quotient.  The Ext-detection theorem then restores control of the small finitistic dimension.

\begin{proposition}\label{prop:level-one}\cite[Proposition 3.7]{ZhangIII}
If $R$ is a cyclic weak $(1,d)$-ring, then $\fPD(R)\leq d$.
\end{proposition}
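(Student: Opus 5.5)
We plan to verify the criterion of Theorem~\ref{thm:Ext-detection} with the given $d$. So fix a finitely generated ideal $I$ of $R$ and assume
\[
 \Ext_R^i(R/I,R)=0\quad\text{for }0\leq i\leq d.
\]
We must show that $\Ext_R^i(R/I,R)=0$ for every $i\geq0$.

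First, $R/I$ is $1$-presented. Indeed, $I$ is finitely generated, so $0\to I\to R\to R/I\to0$ exhibits $R/I$ as finitely presented in the sense of Definition~\ref{def:n-presented}. This is exactly the point where level one differs from level $n\geq2$: no finite presentation of $I$ itself is needed, so Lemma~\ref{lem:cyclic-ideal} plays no role. Since $R$ is a cyclic weak $(1,d)$-ring, Definition~\ref{def:cyclic-weak} gives $\pd_R(R/I)\leq d$. Hence
\[
 \Ext_R^i(R/I,R)=0\quad\text{for all }i>d,
\]
because Ext in degrees beyond the projective dimension vanishes for every second argument.

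Combining the two ranges, $\Ext_R^i(R/I,R)=0$ for $0\leq i\leq d$ by hypothesis and for $i>d$ by the projective dimension bound. So the Ext groups vanish in all degrees, which is precisely the implication required in Theorem~\ref{thm:Ext-detection}. Since $I$ was an arbitrary finitely generated ideal, that theorem yields $\fPD(R)\leq d$.

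I do not expect a genuine obstacle. The only point to check is that Theorem~\ref{thm:Ext-detection} quantifies over all finitely generated ideals and requires vanishing only of $\Ext$ into $R$, which is what the argument supplies. If one preferred to avoid Theorem~\ref{thm:Ext-detection}, an alternative is to use \eqref{eq:fPD-grade} together with Proposition~\ref{prop:grade-properties}(v),(vi). That route would show that a finitely generated $I\subseteq\mathfrak m$ with $\Kgr(I,R)>d$ would give $\Ext^{\Kgr}_R(R/I,R)\neq0$ in a degree above $\pd_R(R/I)\leq d$, a contradiction. This requires some care when the grade is infinite, so I would use the Ext-detection route.
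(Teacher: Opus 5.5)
Your proposal is correct and follows the paper's proof: $R/I$ is finitely presented and cyclic, so $\pd_R(R/I)\leq d$ kills $\Ext_R^i(R/I,R)$ for $i>d$, and Theorem~\ref{thm:Ext-detection} then gives $\fPD(R)\leq d$. The only detail to state explicitly is that the finite presentation of $R/I$ has the form $R^s\to R\to R/I\to0$, coming from a surjection $R^s\to I$.
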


\begin{proof}
Let $I$ be a finitely generated ideal.  Then $R/I$ is finitely presented and cyclic, so $\pd_R(R/I)\leq d$.  Consequently, $\Ext_R^i(R/I,R)=0$ for all $i>d$.  If these groups also vanish for $0\leq i\leq d$, they vanish in every degree.  Theorem~\ref{thm:Ext-detection} gives $\fPD(R)\leq d$.
\end{proof}

The preceding results give a sharp dichotomy.

\begin{corollary}\label{cor:sharp-threshold}
The cyclic weak condition controls the small finitistic dimension at level $n=1$, but at no level $n\geq2$.  More precisely, a cyclic weak $(1,d)$-ring satisfies $\fPD(R)\leq d$, whereas for each fixed $n\geq2$ the values of $\fPD$ on cyclic weak $(n,0)$-rings include every positive integer.
\end{corollary}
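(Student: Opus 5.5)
The corollary combines two results already proved, so I would prove its two halves separately and then assemble them.

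\emph{Level $n=1$.} The first assertion is exactly Proposition~\ref{prop:level-one}. I would recall only the mechanism. For a finitely generated ideal $I$, the quotient $R/I$ is finitely presented and cyclic. The cyclic weak $(1,d)$ hypothesis therefore gives $\pd_R(R/I)\leq d$, so $\Ext_R^i(R/I,R)=0$ for every $i>d$. The criterion of Theorem~\ref{thm:Ext-detection} then gives $\fPD(R)\leq d$. Nothing new is needed at this level.

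\emph{Levels $n\geq2$.} Fix $n\geq2$ and a positive integer $h$. I would take $T=T_{h+1}$ from \eqref{eq:regular-large-idealization}, which is allowed because $h+1\geq2$.
\begin{enumerate}[label=\textup{(\arabic*)}]
\item By Theorem~\ref{thm:exact-cyclic-example}(i), every $n$-presented cyclic $T$-module is projective. Hence its projective dimension is $0$, and $T$ is a cyclic weak $(n,0)$-ring.
\item By Theorem~\ref{thm:exact-cyclic-example}(iii), $\fPD(T)=(h+1)-1=h$.
\end{enumerate}
Letting $h$ range over all positive integers shows that the values of $\fPD$ on cyclic weak $(n,0)$-rings include every positive integer. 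This is Corollary~\ref{cor:prescribed-fPD}.

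To justify the phrase ``no level $n\geq2$,'' I would note two facts. A cyclic weak $(n,0)$-ring is cyclic weak $(n,d)$ for every $d\geq0$. Taking $h=d+1$ therefore produces, for each $d$, a cyclic weak $(n,d)$-ring whose $\fPD$ exceeds $d$; this is Corollary~\ref{cor:negative-Mahdou-question}. So no bound of the form $\fPD(R)\leq f(n,d)$ can hold once $n\geq2$.

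I do not expect a genuine obstacle here. The substantive input is Theorem~\ref{thm:no-finitely-presented-ideals} combined with Proposition~\ref{prop:grade-Ir}, and both are already established. The only point needing care is the indexing shift $T=T_{h+1}$: the requirement $r\geq2$ in \eqref{eq:regular-large-idealization} is exactly what restricts the attained values to positive $h$.
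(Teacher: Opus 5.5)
Your proposal is correct and matches the paper, which states this corollary without a separate argument: it places Proposition~\ref{prop:level-one} for $n=1$ alongside Corollary~\ref{cor:prescribed-fPD} (the rings $T_{h+1}$ of Theorem~\ref{thm:exact-cyclic-example}) for $n\geq2$. Your additional remark via Corollary~\ref{cor:negative-Mahdou-question}, and your index shift $r=h+1\geq2$, are consistent with how the paper uses these results.
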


\begin{remark}\label{rem:visibility-summary}
For $n\geq2$, a finitely generated ideal $I$ makes $R/I$ only $1$-presented; it need not make the quotient $n$-presented.  In the rings $T_r$, this gap is maximal: no nonzero proper finitely generated ideal is finitely presented, yet the ideals $I_r$ have arbitrarily large Ext grade.  This is the presentation-theoretic counterpart of the polynomial phenomenon, where a nonmaximal Goldman prime carries depth invisible on the maximal spectrum of the base ring.
\end{remark}

\end{document}